\theoremstyle{plain}
\newtheorem{theorem}{Theorem}[section]
\newtheorem{lemma}[theorem]{Lemma}
\newtheorem{corollary}[theorem]{Corollary}
\theoremstyle{definition}
\newtheorem{definition}[theorem]{Definition}
\theoremstyle{remark}
\newtheorem{remark}[theorem]{Remark}
\newcommand{\R}{\mathbb{R}}
\newcommand{\del}{\partial}
\newcommand{\bs}{\bar\sigma}
\newcommand{\lks}{\mathcal{L}}
\newcommand{\tlks}{\widetilde{\mathcal{L}}}
\newcommand{\bds}{\mathcal{B}}
\begin{document} 

\title{Knots, Braids and First Order Logic}

\author{Siddhartha Gadgil}

\address{	Department of Mathematics,\\
		Indian Institute of Science,\\
		Bangalore 560012, India}

\email{gadgil@math.iisc.ernet.in}

\author{Prathamesh, T\,. V\,. H\,.}

\subjclass{Primary 57M99; Secondary 17B99}

\date{\today}

\begin{abstract}
Determining when two \emph{knots} are equivalent (more precisely \emph{isotopic}) is a fundamental problem in topology. Here we formulate this problem in terms of \emph{Predicate Calculus}, using the formulation of knots in terms of braids and some basic topological results. 

Concretely, Knot theory is formulated in terms of a language with signature $(\cdot,T,\equiv, 1,\sigma,\bar\sigma)$, with $\cdot$ a $2$-function, $T$ a $1$-function, $\equiv$ a $2$-predicate and $1$, $\sigma$ and $\bar\sigma$ constants. We describe a \emph{finite} set of axioms making the language into a (first order) theory. We show that every knot can be represented by a term $b$ in $1$, $\sigma$, $\bs$  and $T$, and knots represented by terms $b_1$ and $b_2$ are equivalent if and only if $b_1\equiv b_2$.

Our formulation gives a rich class of problems in First Order Logic that are important in Mathematics.

\end{abstract}

\maketitle

\section{Introduction}

A (tame) \emph{knot} is a \emph{smooth embedding} of the circle $S^1$ into $3$-dimensional Euclidean space $\R^3$, or equivalently the $3$-sphere $S^3$ (which is viewed as $\R^3$ with an additional point at infinity). We say that two knots are \emph{isotopic} if one can be deformed into the other through smooth embeddings (we give a more precise definition in Section 2. Determining when two knots are equivalent is a fundamental problem in topology.

The goal of this paper is to translate this topological problem into a problem in predicate calculus. We shall in fact formulate the problem of \emph{stable equivalence of links}, which generalises knot equivalence, in terms of predicate calculus. Our formulation is based on the representation of knots in terms of braids.

We give the definitions on knots, links and stable equivalence in Section~\ref{defns}. We then state the axiom system for stable equivalence of links in Section ~\ref{axioms}. We then recall the formulation of knot theory in algebraic terms via Braids in Section~\ref{brdsKnts}. We reformulate this to give a concise description of stable equivalence of links in Section ~\ref{infbrds}. This allows us to prove that our axiom system describes knot theory in Section ~\ref{kntsmodel}. Finally, we give the topological background concerning braids and knots in an Appendix (the paper can be read without this).
 
\section{Knots, Links and stable equivalence}\label{defns}

We begin by recalling some basic definitions. We shall assume that all knots and links are smooth to exclude \emph{wild knots}.

\begin{definition}
 A knot K is defined as the image of a smooth, injective map $h:S^1\rightarrow S^3$ so that $h'(\theta)\neq 0$ for all $\theta\in S^1$.
\end{definition}

\begin{definition}
A link $L\subset S^3$ is a smooth 1-dimensional submanifold of $S^3$ such that each component of $L$ is a knot and there are only finitely many components.
\end{definition}

We shall regard two links as the same if there is an \emph{ambient isotopy} between them, which is defined as follows.

\begin{definition}[Ambient Isotopy] Two links $L_1$ and $L_2$ in $S^3$ are said to be ambient isotopic if there exists a smooth map $F:S^3\times [0,1]\rightarrow S^3$ such that
\begin{enumerate}
 \item $F|_{{S^3}\times \{0\}}$ = $id|_{S^3} : S^3\rightarrow S^3$.
\item $F|_{S^3\times \{1\}}(L_1) = L_2$.
\item $F|_{S^3\times \{1\}}$ is a diffeomorphism $\forall s\in [0,1]$.
\item F is smooth. 
\end{enumerate}
\end{definition}

This gives an equivalence relation by the following well-known theorem.
\begin{theorem}
 Ambient Isotopy induces an equivalent relation on the set of all links.
\end{theorem}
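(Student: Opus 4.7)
The plan is to verify the three relation axioms in turn, with the smoothness conditions being the only subtle points. Reflexivity is immediate: for any link $L$, the map $F(x,t) = x$ is smooth and each slice is $\mathrm{id}_{S^3}$, which is a diffeomorphism carrying $L$ to $L$.

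For symmetry, suppose $F$ is an ambient isotopy from $L_1$ to $L_2$ and write $\phi_t := F(\cdot,t)$, so $\phi_0 = \mathrm{id}$ and $\phi_1(L_1) = L_2$. I would construct the reverse isotopy as
\[
G(x,t) := \phi_{1-t}\bigl(\phi_1^{-1}(x)\bigr).
\]
Then $G(\cdot,0) = \phi_1\circ\phi_1^{-1} = \mathrm{id}$, each $G(\cdot,t)$ is a composition of two diffeomorphisms (hence a diffeomorphism), and $G(\cdot,1) = \phi_1^{-1}$ carries $L_2$ back to $L_1$. The delicate point is that $(x,t)\mapsto G(x,t)$ is jointly smooth; this reduces to showing that $x \mapsto \phi_1^{-1}(x)$ is smooth on the closed manifold $S^3$, which follows from the inverse function theorem applied to the diffeomorphism $\phi_1$.

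For transitivity, let $F$ be an ambient isotopy from $L_1$ to $L_2$ and $G$ from $L_2$ to $L_3$, with $\phi_t := F(\cdot,t)$ and $\psi_t := G(\cdot,t)$. The naive concatenation at $t=1/2$ is only continuous, not smooth, so I would first reparametrize via a fixed smooth function $\rho\co [0,1]\to[0,1]$ with $\rho\equiv 0$ on $[0,1/4]$ and $\rho\equiv 1$ on $[3/4,1]$, and define
\[
H(x,t) := \begin{cases} \phi_{\rho(2t)}(x), & t\in[0,1/2],\\ \psi_{\rho(2t-1)}\bigl(\phi_1(x)\bigr), & t\in[1/2,1]. \end{cases}
\]
Near $t = 1/2$ from the left, $\rho(2t)\equiv 1$, so $H(x,t)=\phi_1(x)$; near $t = 1/2$ from the right, $\rho(2t-1)\equiv 0$, so $H(x,t)=\psi_0(\phi_1(x))=\phi_1(x)$. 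Hence $H$ is stationary in $t$ on a neighbourhood of $1/2$, so the two smooth pieces glue into a globally smooth map. Each slice is a diffeomorphism (either $\phi_s$ or $\psi_s\circ\phi_1$), we have $H(\cdot,0) = \mathrm{id}$, and $H(\cdot,1)(L_1) = \psi_1(\phi_1(L_1)) = \psi_1(L_2) = L_3$, as required.

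The two places where something could go wrong are precisely the smoothness of the inverse in the symmetry step and the smoothness at the join in the transitivity step; I expect the join to be the main obstacle, but the bump-reparametrization by $\rho$ resolves it cleanly since it makes both pieces literally constant in $t$ near the gluing point.
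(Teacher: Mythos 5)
Your proof is correct. The paper itself offers no argument for this statement (it is quoted as a well-known theorem), so there is nothing to compare against; your verification is the standard one: reflexivity via the constant isotopy, symmetry via $G(x,t)=F(\phi_1^{-1}(x),1-t)$, whose joint smoothness follows since $\phi_1^{-1}$ is smooth, and transitivity via concatenation after a smooth time-reparametrization $\rho$ that makes both pieces constant in $t$ near the gluing time, which correctly disposes of the only genuine subtlety (smoothness at the join). The endpoint conditions $H(\cdot,0)=\mathrm{id}$, $H(\cdot,1)(L_1)=L_3$ and the diffeomorphism property of each slice are all checked, so the argument is complete.
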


To give a description of knot theory in terms of predicate calculus, we introduce another equivalence relation on links which we call \emph{stable equivalence}.

\begin{definition}
A link $L'$ is said to be a stabilisation of a link $L$ if the following conditions hold.
\begin{enumerate}
\item $L'=L\cup L''$ with $L''$ disjoint from $L$.
\item There is a collection $\{D_1,D_2,\dots,D_n\}$ of disjoint, smoothly embedded discs  in $S^3\setminus L$ with $L''=\bigcup\limits_{i=1}^n \del D_i$. 
\end{enumerate}
\end{definition} 

\begin{definition}
Two links $L_1$ and $L_2$ are said to be stably equivalent, denoted $L_1\equiv L_2$, if there are stabilisations $L'_1$ and $L'_2$ of $L_1$ and $L_2$, respectively, that are ambient isotopic.  
\end{definition}

It is easy to see that $\equiv$ is an equivalence relation. The following result follows from the prime decomposition theorem of Knesser-Milnor (see, for example, \cite{kauffman1987knots}).

\begin{theorem}
If $K_1$ and $K_2$ are knots (regarded as links), then $K_1\equiv K_2$ if and only if $K_1$ is ambient isotopic to $K_2$.
\end{theorem}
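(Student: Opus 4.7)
The reverse direction is immediate: if $K_1$ is ambient isotopic to $K_2$, then taking $L'_1 = K_1$ and $L'_2 = K_2$ themselves (i.e., adding no discs) exhibits stabilisations whose ambient isotopy witnesses $K_1 \equiv K_2$.

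For the forward direction, suppose $K_1 \equiv K_2$. Unpacking the definitions, there exist stabilisations $K'_i = K_i \cup L''_i$ with $L''_i = \bigsqcup_{j=1}^{n_i} \partial D^{(i)}_j$ for pairwise disjoint smoothly embedded discs $D^{(i)}_j \subset S^3 \setminus K_i$, together with an ambient isotopy $F \colon S^3 \times [0,1] \to S^3$ satisfying $F_1(K'_1) = K'_2$. I first observe that $L''_i$ is a trivial unlink \emph{split} from $K_i$: a sufficiently small regular neighborhood $B^{(i)}_j$ of each $D^{(i)}_j$ is a 3-ball disjoint from $K_i$ and from the other $B^{(i)}_{j'}$, whose boundary sphere isolates the unknot $\partial D^{(i)}_j$ within it. Hence the components of $K'_i$ are precisely $K_i$ together with $n_i$ unknots, each contained in its own small 3-ball disjoint from $K_i$.

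The ambient isotopy $F$ carries components of $K'_1$ to components of $K'_2$: restricting $F$ to $K_1 \subset K'_1$ gives an ambient isotopy of $S^3$ taking $K_1$ to $F_1(K_1)$, and $F_1(K_1)$ is a smoothly embedded circle inside the one-dimensional manifold $K'_2$, hence a full component of $K'_2$. By the split structure established above, this component is either $K_2$ itself or one of the unknots in $L''_2$. In the first case $K_1$ is ambient isotopic to $K_2$ directly. In the second case $K_1$ is ambient isotopic to an unknot, hence is itself the unknot; the symmetric argument applied to the inverse isotopy $F^{-1}$ and to $K_2$ then forces $K_2$ to be the unknot as well, so $K_1$ and $K_2$ are again ambient isotopic.

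The main subtle point — and where the Knesser--Milnor prime decomposition theorem is meant to enter, as the authors indicate — is the uniqueness of the split decomposition: the conclusion that ambient isotopy cannot ``mix'' the knot summand with the trivial ones. My argument sidesteps the full decomposition machinery by tracking where the specific component $K_1$ lands under the isotopy, but any rigorous proof must at some point appeal to the fact that the splitting discs truly produce split unknots and that the isotopy class of a component is an ambient-isotopy invariant of the whole link.
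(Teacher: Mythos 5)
Your argument is correct, and it takes a genuinely different (and more elementary) route than the paper, which gives no proof at all and simply asserts that the statement follows from the Kneser--Milnor prime decomposition theorem. Your key step --- $F_1$ is a diffeomorphism carrying components of $K_1'$ bijectively onto components of $K_2'$, so $F_1(K_1)$ is either $K_2$ (and then $F$ itself is the required isotopy) or an unknotted component of $L_2''$, in which case $K_1$ is the unknot and, since injectivity of $F_1$ rules out $F_1^{-1}(K_2)=K_1$, the same dichotomy applied to the inverse isotopy makes $K_2$ the unknot as well --- uses nothing beyond the definitions plus two standard facts: a circle bounding a smoothly embedded disc in $S^3$ is ambient isotopic to the round unknot (regular neighbourhood of the disc plus isotopy extension), and the family $F_s^{-1}$ is again a smooth ambient isotopy. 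Note that the split structure you set up with the balls $B^{(i)}_j$ is never actually used: all you need is that each component of $L_i''$ bounds one of the disjoint embedded discs, hence is an unknot, and that restricting an ambient isotopy of the whole link to a single component gives an ambient isotopy of that component onto its image component. The decomposition machinery the authors cite is what one would reach for in the analogous uniqueness questions for links with several components, where one cannot simply track ``the'' knotted component; for knots your tracking argument buys a short, self-contained proof, at the price of being special to the one-component case. Two small points would make the write-up airtight: invoke explicitly that ambient isotopy is an equivalence relation (the paper's earlier theorem) when you conclude from ``both are unknots'' that $K_1$ and $K_2$ are isotopic to each other, and in the reverse direction either note that the empty collection of discs is permitted in the definition of stabilisation or remark that adding one small split unknot to each side works equally well.
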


We shall denote by $\lks$ the set of equivalence classes of links up to stable equivalence and by $\tlks$ the set of ambient isotopy classes of links. Thus $\lks$ is a quotient of $\tlks$.

\section{Axioms in First Order Logic}\label{axioms}

In this section, we list a set of axioms which enable us to describe stable equivalence of links in terms of first order logic with equality. In later sections we will further substantiate on why these axioms  suffice. Consider a language with signature $(\cdot, T, \equiv, 1, \sigma , \bs)$ such that $\cdot$ is a $2$-function , T is a $1$-function, $\equiv$ is a $2$-predicate, while $1$, $\sigma$ and $\bs$ are constants. The system of axioms for the infinite braid group in this language are the following.

\begin{itemize}

\item Group Axioms (for the set of closed terms)
\begin{enumerate}
\item $\forall x,y,z \quad (x \cdot (y \cdot z) = ((x\cdot y) \cdot z)$

\item$\forall x \quad 1 \cdot x
 = x$
\item $\forall x \quad  x \cdot 1 = x$
\item$ \sigma \cdot \bs = \bs \cdot \sigma = 1$ 
\end{enumerate}
\item Shift operation
\begin{enumerate}
\item $ \forall x,y \quad          T(x \cdot y) = T(x)\cdot T(y)$
\item $  T(e) = e $
\end{enumerate}
\item Braid axioms 
\begin{enumerate}
\item $ \sigma \cdot T(\sigma) \cdot \sigma = T(\sigma) \cdot \sigma \cdot T(\sigma)$
\item $  \forall b\quad  \sigma \cdot T^2(b) = T^2(b) \cdot \sigma $
\end{enumerate}
\item  Equivalence relation
\begin{enumerate}
\item $ \forall x  \quad x\equiv x$
\item $  \forall x,y  \quad x\equiv y\implies y\equiv x$
\item $  \forall x,y,z  \quad x\equiv y\wedge y\equiv z\implies x\equiv z$
\end{enumerate}
\item Markov moves
\begin{enumerate}
\item $ \forall x,y,z \quad y\cdot z=1\implies x\equiv y\cdot x\cdot z$
\item $  \forall x \quad x\equiv \sigma\cdot T(x)$
\item $  \forall x \quad x\equiv \bs\cdot T(x)$
\end{enumerate}
\end{itemize}
Any model of these axioms will be referred to as a \textit{ link model} and the axioms will be referred to as \textit{link axioms}.
\section{Algebraic Formulation of knot theory}\label{brdsKnts}

In this section, we recall the algebraic formulation of knots in terms of Braids. We first recall the definition of braid groups.

\begin{definition}
The braid group $B_n$ is the group generated by $\sigma_1,\sigma_2,\dots,\sigma_{n-1}$ with the relations
\begin{enumerate}
\item $ \sigma_i \cdot  \sigma_j = \sigma_j \cdot  \sigma_i $, where $1 \leq i,j \leq n-1$, $i\geq j+2$.
\item $\sigma_i \cdot \sigma_{i+1} \cdot \sigma_i = \sigma_{i+1} \cdot  \sigma_{i}\cdot \sigma_{i+1}$, where $1\leq i \leq n-2$.
\end{enumerate}
\end{definition}

Note that for $m<n$, there is a natural inclusion homomorphism $i_m$ from $B_m$ to $B_n$ mapping a generator $\sigma_i$ of $B_m$ to the generator $\sigma_i$ of $B_n$. We can thus identify elements in $B_m$ with elements in $B_n$. From now we shall  refer to elements of $\bigcup_{n\in\mathbb{N}\setminus {\{1\}}} B_n$ as braids.

Given an integer $m>1$, we associate a diagram to every generator of the braid group $B_m$, as in the following diagram.

\begin{center}
\includegraphics[scale=0.25]{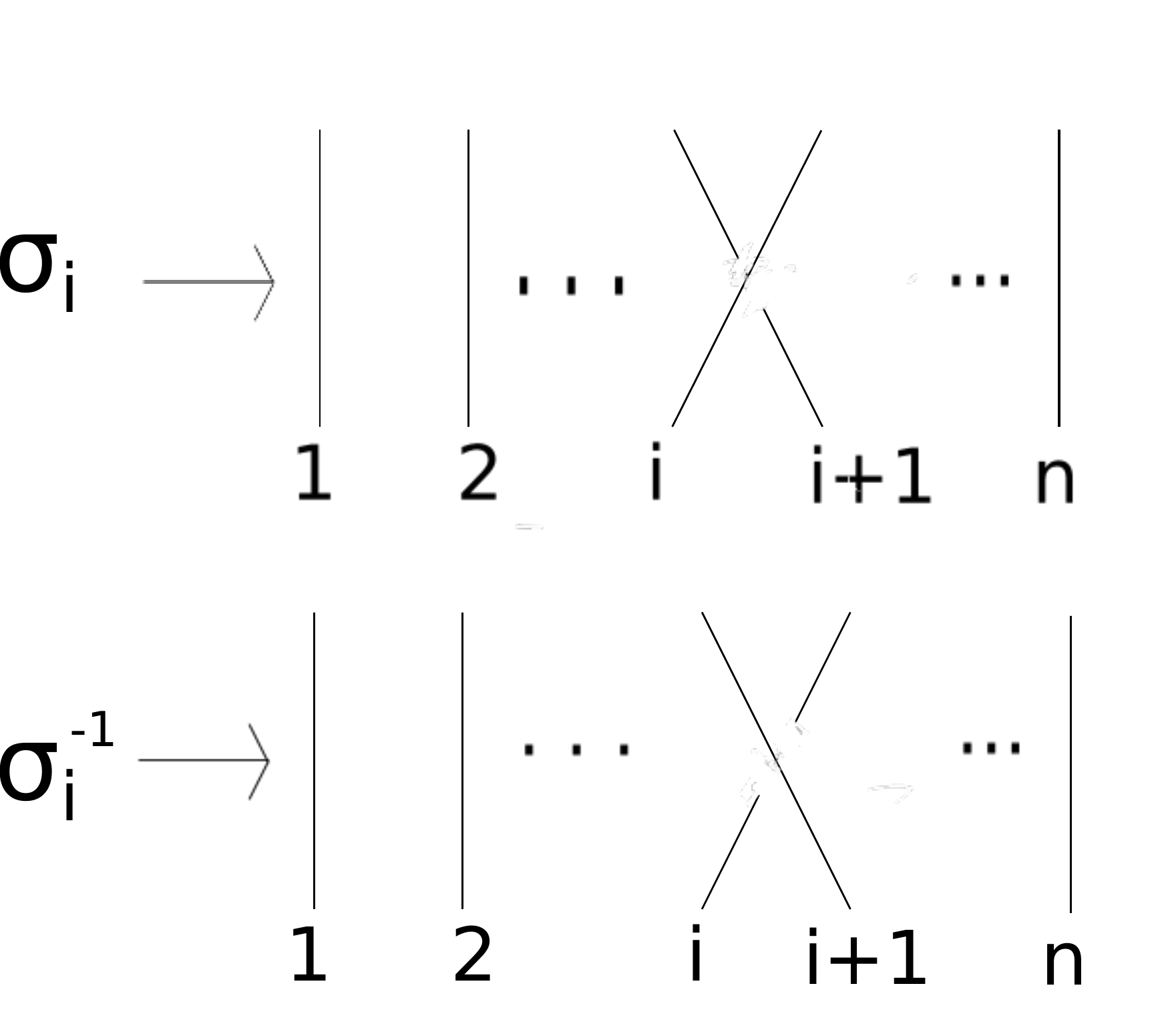}
\end{center}

\newpage
One can thus associate a diagram to every element of $B_m$ by defining the diagram associated to the product of two elements as in the figure below

\begin{center}
\includegraphics[scale=0.40]{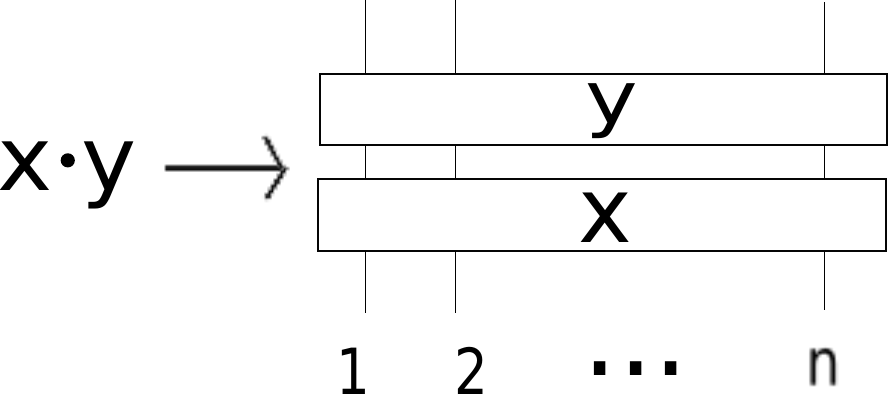}
\end{center}

Given an integer $m>1$ and a braid $b\in B_m$, we can associate to the braid a  link $\lambda(b,m)$ by closing up the diagram associated to a braid and smoothening the sharp edges. A more rigorous construction is provided in the Appendix ~\ref{append}

\begin{center}
\includegraphics[scale=0.25]{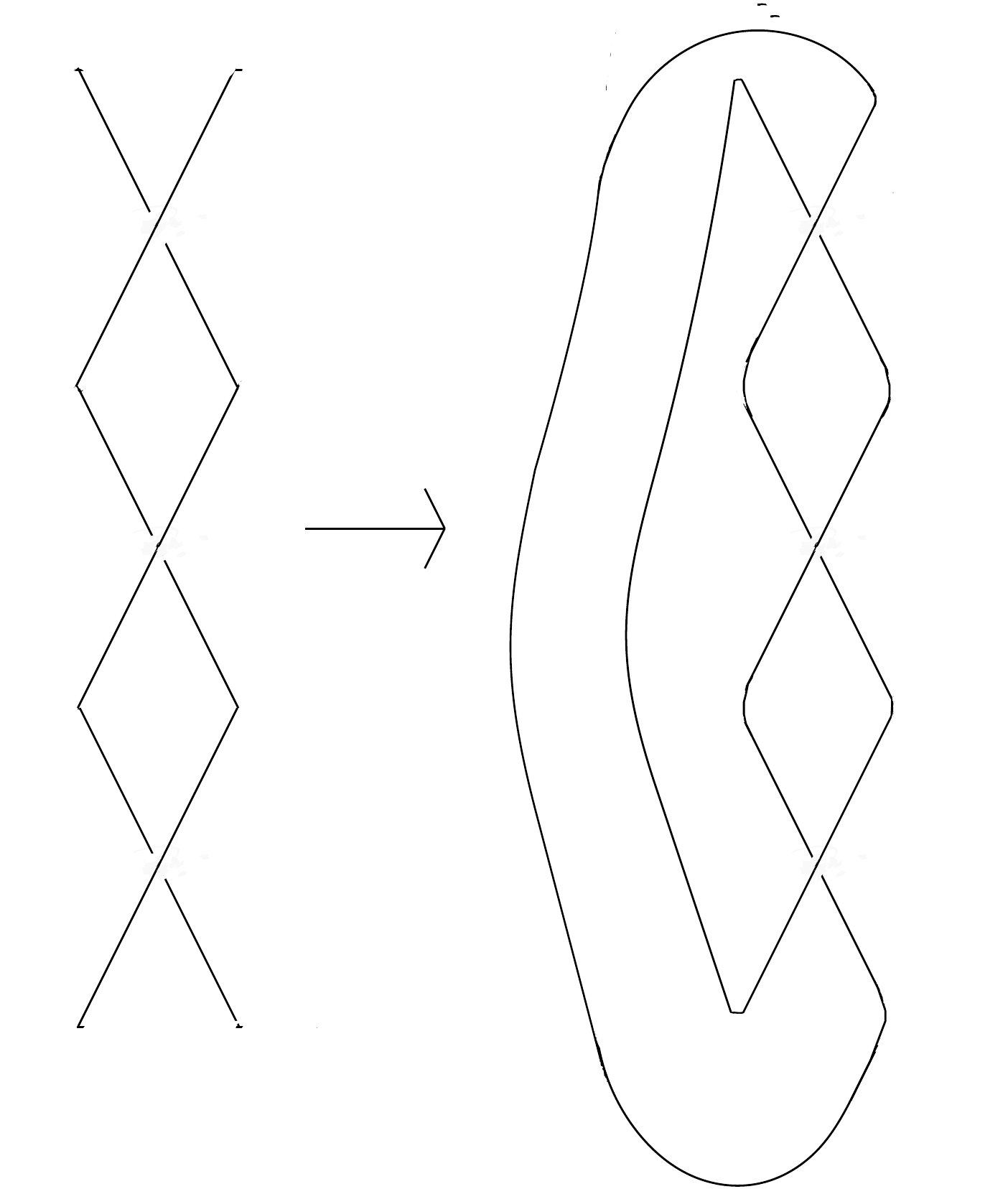}
\end{center}

This gives a function $\lambda:\bds\to\tlks$ from the set
$$\bds=\{(b,m) : b\in B_m\}$$
to the set of links. The following result says that all links are obtained by this construction.
\begin{theorem}[Alexander]\label{Alexander}
For every link $L$, there is an integer $m>1$ and a braid $B\in B_m$ so that $L$ is ambient isotopic to $\lambda(b,m)$.
\end{theorem}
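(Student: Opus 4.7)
The plan is to adapt Alexander's classical argument: produce a generic diagram of $L$ in the plane, choose a ``braid axis'' point $p$ and then modify the diagram by ambient isotopy until every arc winds around $p$ in the same rotational sense, at which point the diagram is manifestly the closure of a braid. First I would invoke general position to replace $L$ by an ambient isotopic link admitting a regular projection to $\R^2$ with only double-point crossings, giving a link diagram $D$. Pick a point $p\in\R^2\setminus D$ and orient $D$. Subdivide $D$ into finitely many oriented arcs, each lying in the complement of $p$ and each an embedded segment or a small over/under crossing piece. Call an arc \emph{good} if, as it is traversed, the angular coordinate based at $p$ is monotonically increasing (say counterclockwise), and \emph{bad} otherwise.

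The core move is Alexander's braiding trick. If $\alpha$ is a bad arc, pick a large arc $\beta$ in $\R^2$ (or in $S^3$) with the same endpoints as $\alpha$ that sweeps around $p$ in the good direction, and replace $\alpha$ by $\beta$. Embeddedness is recovered in $S^3$ because $\beta$ may be pushed up or down out of the plane of $D$; at each new crossing of $\beta$ with the rest of the diagram we are free to choose over or under, and consistently choosing (for example) all over-crossings along $\beta$ gives an ambient isotopy of $L$ in $S^3$. A short argument shows the number of bad arcs strictly decreases under this move, so after finitely many iterations every arc is good.

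Once every arc is good, the resulting diagram $D'$ intersects each radial ray from $p$ in a bounded number of points, say at most $m$. Parametrise a small annulus around $p$ by $(\theta,r)\in [0,2\pi]\times[0,1]$. Reading off $D'$ in order of increasing $\theta$, the diagram becomes the closure of a planar braid diagram on $m$ strands, and the recorded sequence of crossings gives an element $b\in B_m$. Then $\lambda(b,m)$ is exactly the link represented by $D'$, which is ambient isotopic to $L$. A more rigorous version of the closure construction is the one described in the Appendix, so it suffices to match $D'$ with that model.

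The main obstacle is the braiding move: one must verify carefully that replacing a bad arc $\alpha$ by the long arc $\beta$ and resolving the new planar crossings by a consistent over/under choice really produces an ambient isotopic link in $S^3$, and that this strictly reduces a well-defined complexity (number of bad arcs, or equivalently the total negative angular variation along $D$). Once the move is justified and termination established, assembling the braid word from the final ``good'' diagram is a routine exercise in reading angular levels.
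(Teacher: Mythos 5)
The paper does not actually prove this theorem: it is quoted as Alexander's classical result (restated in the Appendix as the surjectivity of $\phi$), with the proof left to the literature. So there is no internal argument to compare against; judged on its own, your proposal follows the standard Alexander braiding trick --- regular projection, a base point $p$, subdivision of the diagram into arcs, and repeatedly throwing each ``bad'' arc to the other side of $p$ --- which is indeed the accepted route to the theorem.

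One step in your sketch is stated too loosely, and it is exactly where the content lies. When you replace a bad arc $\alpha$ by the long arc $\beta$, you are \emph{not} free to resolve the new crossings of $\beta$ arbitrarily: the replacement is an ambient isotopy only if $\alpha\cup\beta$ bounds an embedded disc meeting the rest of the link nowhere, so that $\alpha$ can be pushed across that disc. This forces two things. First, the subdivision must be fine enough that each bad arc is involved in at most one crossing of the diagram (your phrase ``a small over/under crossing piece'' gestures at this, but it should be made explicit, since a bad arc that passes over at one crossing and under at another admits no valid throw). Second, the over/under choice along $\beta$ is dictated, not optional: if $\alpha$ is the over-strand at its single crossing (or has no crossing) you push $\beta$ above the plane of the diagram, and if $\alpha$ is the under-strand you push it below; choosing ``all over'' for an under-arc changes the isotopy class. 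With that correction the disc argument goes through, $\beta$ can be taken to be a union of good arcs so the number of bad arcs strictly decreases and no new ones appear, and the remainder of your outline --- reading the final diagram in an annulus around $p$ as a word $b\in B_m$ and identifying its closure with $\lambda(b,m)$ via the construction in the Appendix --- is routine.
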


\begin{remark}\label{remarkov}
For $n>m$ and $b\in B_m\subset B_n$, $\lambda(b,m)$ is not ambient isotopic to $\lambda(b,n)$. However it is easy to see that the links are stably equivalent.
\end{remark}

The following lemma is immediate from the fact that a braid $\prod_{k=1}^m \sigma_{i_k}^{\epsilon_k} \in B_n$, where $\epsilon_k$ is $1$ or $-1$ and $i_k \in \mathbb{N}$, when viewed from the other side of the plane in which the braid lies is represented by $\prod_{k=1}^m \sigma_{n-i_k}^{\epsilon_k}$.

\begin{center}
\includegraphics[scale=0.3]{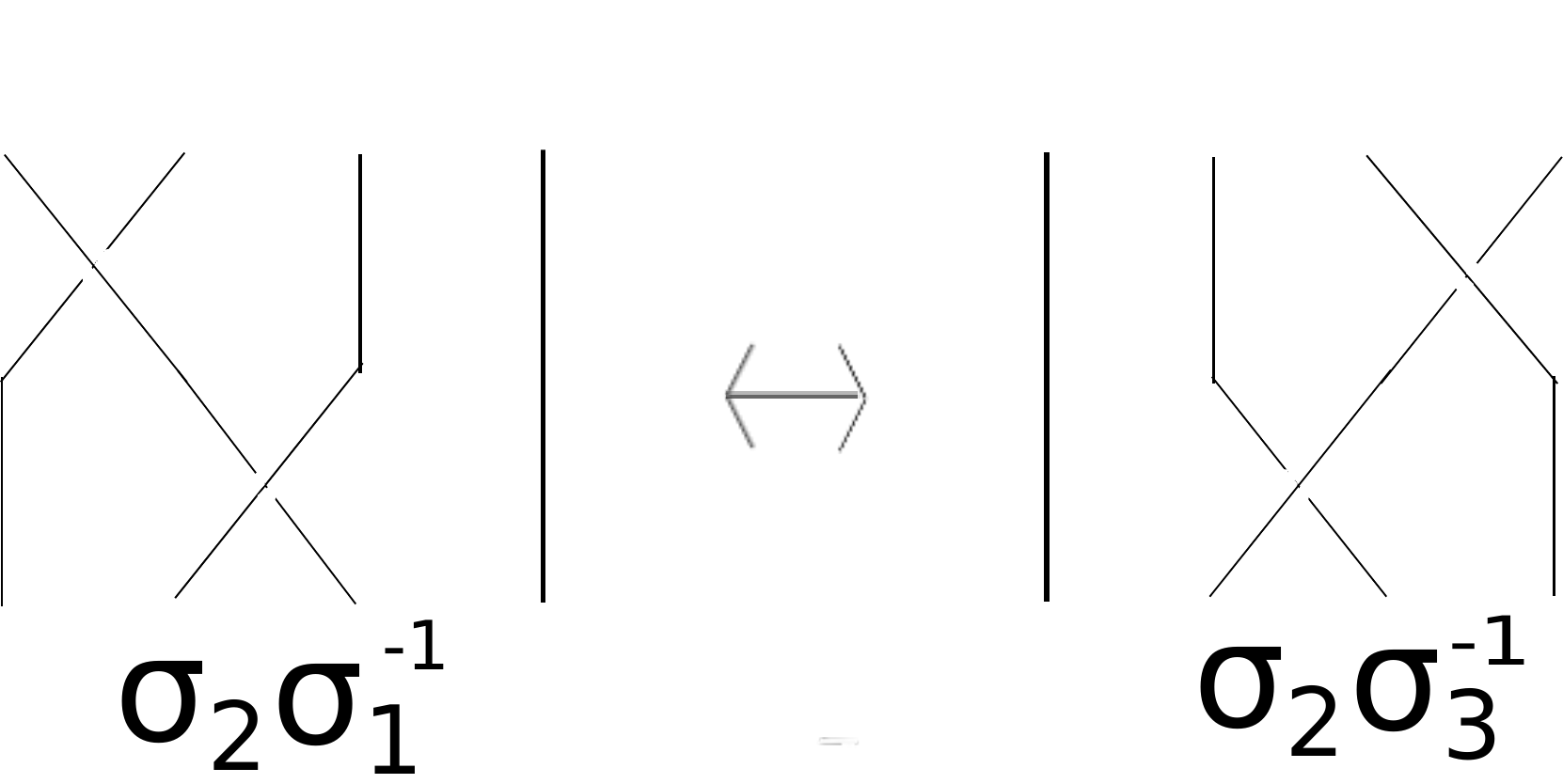}
\end{center}

\begin{lemma} The braid elements  $\prod_{k=1}^m \sigma_{i_k}^{\epsilon_k} $ and $\prod_{k=1}^m \sigma_{n-i_k}^{\epsilon_k}$ are associated to the same link.
 \end{lemma}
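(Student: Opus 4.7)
The plan is to realise the transformation $\sigma_i \mapsto \sigma_{n-i}$ as induced by a concrete rigid motion of $\R^3$ applied to the underlying geometric braid, and then to observe that this motion preserves the link class of the closure.

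Fix the standard geometric realisation of $B_n$ as smooth strings in the slab $\R \times \R \times [0,1]$, with strand endpoints at $(i,0,0)$ and $(i,0,1)$ for $1 \le i \le n$, with the braid diagram obtained by projecting to the $xz$-plane (so that the $y$-coordinate records over/under information). Consider the rotation $R \co \R^3 \to \R^3$ by angle $\pi$ about the vertical axis $\{x=(n+1)/2,\,y=0\}$, i.e.\ the map $(x,y,z)\mapsto (n+1-x,-y,z)$. This is precisely the geometric operation of ``walking around to view the braid from the other side of the $xz$-plane''. The map $R$ is an orientation-preserving isometry that preserves the slab and relabels strand positions as $i \mapsto n+1-i$. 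Being orientation-preserving, $R$ preserves the sign of every crossing, so a generator $\sigma_i$, realised as a positive crossing between strands at positions $i$ and $i+1$, is sent to a positive crossing between strands at positions $n+1-i$ and $n-i$, which is $\sigma_{n-i}$; analogously $\sigma_i^{-1}$ is sent to $\sigma_{n-i}^{-1}$. Consequently, $R$ takes the geometric braid representing $\prod_{k=1}^m \sigma_{i_k}^{\epsilon_k}$ to the one representing $\prod_{k=1}^m \sigma_{n-i_k}^{\epsilon_k}$.

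Extending $R$ to $S^3$ by fixing the point at infinity yields an orientation-preserving diffeomorphism of $S^3$; since $SO(4)$ is path-connected, $R$ is ambient isotopic to the identity of $S^3$. This ambient isotopy carries the closure of the first braid to a link ambient isotopic to the closure of the second, so the two braids are associated to the same link.

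The main point requiring care is the preservation of crossing signs under $R$: since $R$ has positive determinant, the sign of a crossing, defined via an oriented triple consisting of the two tangent vectors at the crossing together with the over/under displacement vector, is preserved. One can alternatively verify this by noting that $R$ is the composition of the two orientation-reversing reflections $x\mapsto n+1-x$ and $y\mapsto -y$: the first alone flips the sign of each crossing and the second alone does too, so their composition preserves all signs. The remaining assertions are routine bookkeeping and the standard fact that orientation-preserving diffeomorphisms of $S^3$ preserve ambient isotopy type of links.
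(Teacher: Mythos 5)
Your proof is correct and is essentially the paper's own argument made precise: the rotation $R\co (x,y,z)\mapsto(n+1-x,-y,z)$ is exactly the paper's ``viewing the braid from the other side of the plane'', and your crossing-sign bookkeeping plus the isotopy of $R$ to the identity in $S^3$ just formalizes what the paper leaves to the accompanying figure. The only detail worth noting is that one should take the closure in an $R$-symmetric fashion (e.g.\ in the standard solid torus with the strand positions placed symmetrically about the rotation axis) so that $R$ carries closure arcs to closure arcs, which is routine and no less rigorous than the paper's treatment.
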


To formulate knot theory in terms of braids, we also need to know when two braids correspond to the same link. To state the result of Markov giving such a characterisation, consider the equivalence relation $\sim$ on the set $\bds$ 
generated by
\begin{itemize}
\item For $a,b\in B_m$, $(b,m)\sim (aba^{-1},m)$.
\item For $b\in B_m$, $(b,m)\sim (\sigma_mb, m+1)$.
\item For $b\in B_m$, $(b,m)\sim (\sigma^{-1}_mb, m+1)$.
\end{itemize}

The relation $\sim$ on the set $\bds$ is called the \textit{Markov equivalence}. The first move corresponds to inserting a braid and its inverse below and above an existing braid respectively. The second corresponds to adding a strand to the right of existing braid in such a way that it crosses the strand previously at the extreme right, below the braid. When viewed from the other side of the plane of the braid it corresponds to shifting the braid to the right by a position, inserting a strand in the first position in such a way that it crosses the strand in second position below the braid. 

\begin{center}
\includegraphics[scale=0.3]{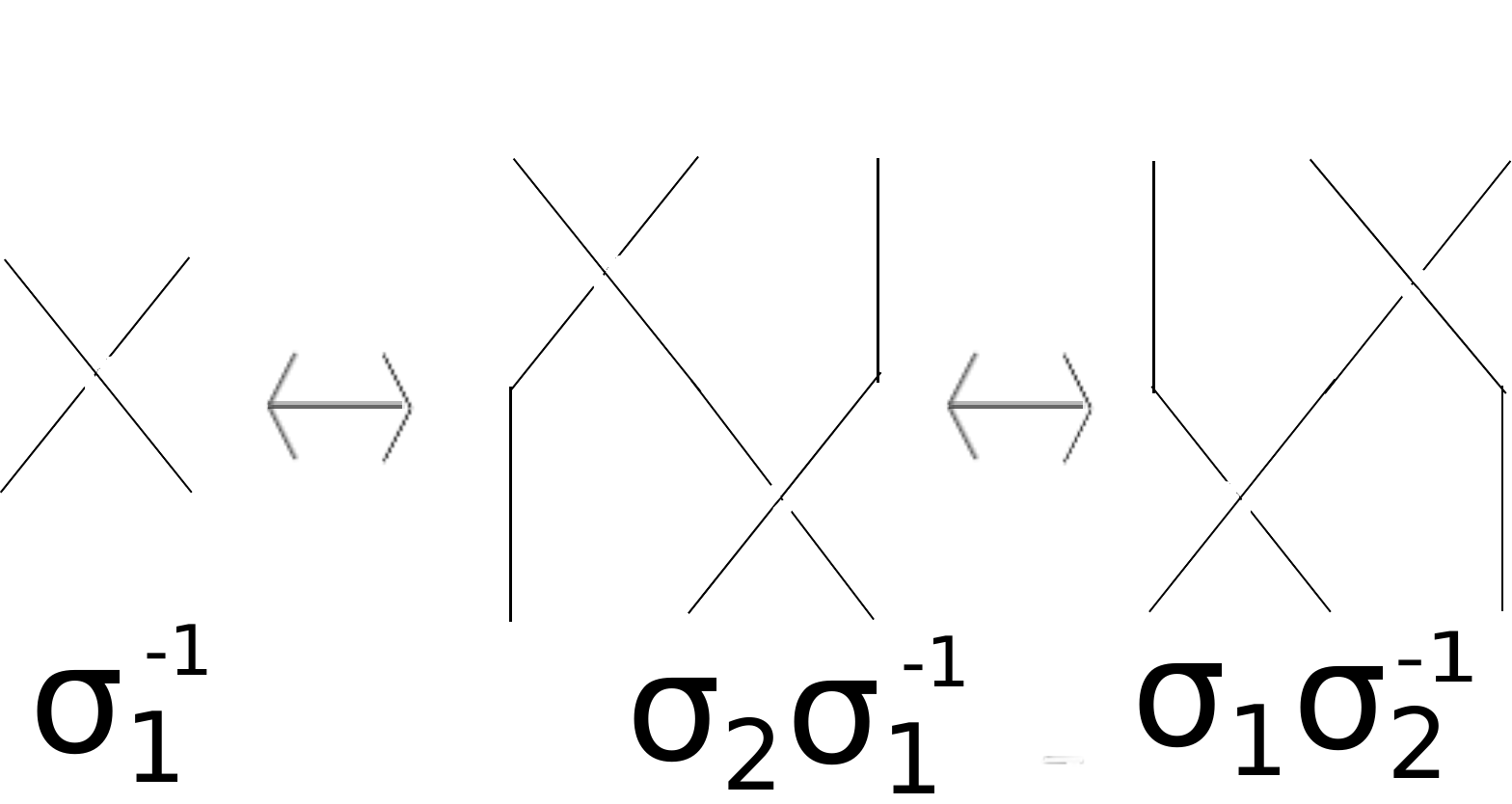}
\end{center}

Thus the second and third Markov moves, could be rewritten to give us the following theorem.

\begin{theorem}\label{altmarkov} The equivalence relation $\cong$ generated by the relations
\begin{enumerate}
\item For $a,b\in B_m$, $m>1$, $(b,m)\cong (aba^{-1},m)$.
\item For $i_k\leq m-1$, $(\prod_{k=1}^m \sigma_{i_k}^{\epsilon_k} ,m)\cong (\sigma_1\prod_{k=1}^m \sigma_{{i_k}+1}^{\epsilon_k}, m+1)$.
\item For $i_k\leq m-1$, $(\prod_{k=1}^m \sigma_{i_k}^{\epsilon_k} ,m)\cong(\sigma^{-1}_1\prod_{k=1}^m \sigma_{{i_k}+1}^{\epsilon_k}, m+1)$.
\end{enumerate}
is the Markov equivalence
\end {theorem}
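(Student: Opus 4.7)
The plan is to show $\sim$ and $\cong$ coincide by checking that each generating move of one relation is derivable from moves of the other. Conjugation inside a fixed $B_m$ appears in both lists, so the work reduces to converting between Markov's right-stabilization $(b,m)\sim(\sigma_m b, m+1)$ and the theorem's left-stabilization $(b,m)\cong(\sigma_1\tau(b), m+1)$, where $\tau\co B_m\hookrightarrow B_{m+1}$ denotes the shift homomorphism $\sigma_i\mapsto\sigma_{i+1}$. The negative stabilization case is identical with $\sigma_m^{-1}$ and $\sigma_1^{-1}$ in place of $\sigma_m$ and $\sigma_1$.

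The central tool is the \emph{flip} automorphism $\phi_n\co B_n\to B_n$ defined on generators by $\phi_n(\sigma_i)=\sigma_{n-i}$. Since the braid relations are symmetric under $i\mapsto n-i$, $\phi_n$ is a well-defined automorphism; and, as is classical, it is \emph{inner}, realized by conjugation with the Garside half-twist $\Delta_n=(\sigma_1)(\sigma_2\sigma_1)\cdots(\sigma_{n-1}\cdots\sigma_1)\in B_n$. A one-line check on generators yields the compatibility identity
\[
\phi_{m+1}(\sigma_m b)=\sigma_1\tau(\phi_m(b))\qquad\text{for every } b\in B_m,
\]
which is the algebraic counterpart of the geometric observation, already used in the preceding lemma, that reading a braid from the opposite side of the plane swaps the outermost positions.

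With these in place, both inclusions follow from the same conjugate-stabilize-conjugate strategy. For $\sim\,\subseteq\,\cong$ one establishes
\[
(b,m)\,\cong\,(\phi_m(b),m)\,\cong\,(\sigma_1\tau(\phi_m(b)),m+1)\,=\,(\phi_{m+1}(\sigma_m b),m+1)\,\cong\,(\sigma_m b,m+1),
\]
where the first and last steps are conjugations by $\Delta_m$ and $\Delta_{m+1}$, and the middle step is $\cong$-move 2 applied to the braid $\phi_m(b)\in B_m$. For $\cong\,\subseteq\,\sim$ one instead writes
\[
(b,m)\,\sim\,(\phi_m(b),m)\,\sim\,(\sigma_m\phi_m(b),m+1)\,\sim\,(\sigma_1\tau(b),m+1),
\]
whose middle step is Markov's stabilization and whose final step is conjugation by $\Delta_{m+1}$, simplified via $\phi_{m+1}\circ\phi_m=\tau$ on $B_m$.

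The main obstacle is verifying that $\phi_n$ equals conjugation by $\Delta_n$. This is Garside's classical half-twist formula and is proved by induction on $n$ using the factorization $\Delta_n=\Delta_{n-1}(\sigma_{n-1}\sigma_{n-2}\cdots\sigma_1)$ together with the braid and far-commutation relations; all remaining steps are direct manipulations of words in the braid generators.
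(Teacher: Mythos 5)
Your proof is correct, but it is a genuinely different (and more rigorous) route than the paper's. The paper offers essentially a geometric justification: the lemma that the mirror words $\prod\sigma_{i_k}^{\epsilon_k}$ and $\prod\sigma_{n-i_k}^{\epsilon_k}$ close to the same link, plus the picture of "viewing the braid from the other side of the plane," under which right stabilization $(b,m)\sim(\sigma_m b,m+1)$ becomes the left stabilization of the theorem; no formal proof is written down, and as stated the lemma only gives equality of closures, so promoting it to Markov equivalence would implicitly lean on Markov's theorem. You instead make the mirror symmetry algebraic: the flip $\phi_n(\sigma_i)=\sigma_{n-i}$ is inner, realized by the Garside half-twist $\Delta_n$, so $(b,m)$ and $(\phi_m(b),m)$ are related by the conjugation move itself; your compatibility identity $\phi_{m+1}(\sigma_m b)=\sigma_1\tau(\phi_m(b))$ and $\phi_{m+1}\circ\phi_m=\tau$ then convert each stabilization of one relation into a conjugate-stabilize-conjugate chain in the other, with the negative case identical. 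This buys a self-contained, purely word-level proof that stays inside the generating moves and never invokes Markov's theorem or the figures; the cost is needing the classical half-twist formula $\Delta_n\sigma_i\Delta_n^{-1}=\sigma_{n-i}$, which you correctly flag and which is a standard induction. One small point worth making explicit: moves (2) and (3) of $\cong$ are stated on words, but since $w\mapsto\sigma_1\tau(w)$ descends to the homomorphism $b\mapsto\sigma_1\tau(b)$ on $B_m$, applying them to any word representing $\phi_m(b)$ is legitimate, as your argument tacitly assumes.
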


\begin{theorem}[Markov]\label{markov}
For $i=1,2$, let $m_i>1$ be integers and $b_i\in B_{m_i}$. Then the links $\lambda(b_1,m_1)$ and $\lambda(b_2, m_2)$ are isotopic if and only if $(b_1,m_1)\sim (b_2,m_2)$.
\end{theorem}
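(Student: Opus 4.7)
The plan is to treat the two implications separately. For the easier direction, that Markov equivalence implies ambient isotopy of closures, each of the three generating moves can be verified directly on diagrams. Conjugation $(b,m)\sim(aba^{-1},m)$ is realized by sliding $a^{-1}$ around the closure arcs at the top of the braid so as to cancel with $a$, leaving the closure of $b$. The stabilization moves $(b,m)\sim(\sigma_m^{\pm 1}b,m+1)$ introduce a single new strand that crosses the previously rightmost strand exactly once; after forming the closure, this crossing becomes a kink that is removed by a type I Reidemeister move, recovering $\lambda(b,m)$.

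The converse is the substantive content of Markov's theorem. The approach I would take uses Alexander's theorem (Theorem~\ref{Alexander}) as the bridge between links and braids and then lifts an ambient isotopy to a combinatorial one. First, fix planar diagrams for $\lambda(b_1,m_1)$ and $\lambda(b_2,m_2)$ arising from the braid-closure construction. Since the links are ambient isotopic, Reidemeister's theorem produces a finite sequence of diagrams $D_0, D_1, \dots, D_N$ connecting them, with consecutive diagrams differing by a planar isotopy or a single Reidemeister move. Next, apply a controlled braiding procedure to each $D_k$ to produce a braid $\beta_k \in B_{n_k}$ with $\lambda(\beta_k, n_k)$ realizing $D_k$. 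The proof then reduces to showing that consecutive pairs $(\beta_k, n_k)$ and $(\beta_{k+1}, n_{k+1})$ are Markov equivalent.

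The main obstacle is this last step. Alexander's braiding algorithm involves choices (the braid axis, the order in which arcs are braided), so \emph{a priori} two braidings of the same diagram need not produce the same braid, let alone braids related by an obvious move. One therefore has to establish in parallel two auxiliary facts: that any two braidings of a single diagram produce Markov equivalent braids, and that each of the three Reidemeister moves, when performed on a braided diagram and then rebraided, alters the corresponding braid only up to Markov moves. The Reidemeister I case corresponds almost directly to a stabilization, but handling Reidemeister II and III while keeping everything in braid form is delicate and is precisely where the depth of the theorem lies. Classical arguments of Markov and Birman, and the streamlined proofs by Morton, Traczyk, and Lambropoulou--Rourke, carry out this analysis via threading constructions or induction on diagram complexity; the pragmatic route in the present paper, where Markov's theorem is cited rather than developed, is to appeal to one of these standard treatments.
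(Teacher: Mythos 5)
The paper does not actually prove this theorem: Markov's theorem is quoted as a classical result (with the standard braid-group literature cited for its proof), which is exactly the pragmatic route you end up recommending, so your proposal is consistent with the paper's treatment. Your verification of the easy direction (conjugation slides around the closure, stabilization undone by a Reidemeister~I move on the closed diagram) and your identification of where the real work lies in the converse (invariance of the braiding procedure under its choices, and handling Reidemeister~II/III while staying in braid form, as in Morton, Traczyk, or Lambropoulou--Rourke) are accurate.
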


To state and equivalent condition for stable equivalence of links, consider the equivalence relation $\approx$ on $\bds$ generated by
\begin{itemize}
\item For any $\beta_1,\beta_2 \in \bds$ such that $\beta_1\sim \beta_2$ , $ \beta_1\approx \beta_2$. 
\item For $m_1,m_2 \in \mathbb{N}$ such that $b \in B_{m_1}, B_{m_2}$, $(b, m_1) \approx (b, m_2)$ 
\end{itemize}

\begin{lemma}\label{stable} Two links are stably equivalent if and only if given $\lambda(b_1,m_1) = l_1$ and $\lambda(b_2, m_2) = l_2$, then  $(b_1,m_1)\approx (b_2,m_2)$.
\end{lemma}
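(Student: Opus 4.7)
The plan is to prove both directions by combining Markov's theorem (Theorem \ref{markov}) with the observation that adding inert strands to a braid corresponds precisely to adding unknotted, unlinked components to its closure.

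For the ``if'' direction, I would argue by induction on the length of a chain of generating relations defining $\approx$. It suffices to check each type of generator.
\begin{itemize}
\item If $(b_1,m_1)\sim(b_2,m_2)$, Theorem \ref{markov} gives an ambient isotopy between $\lambda(b_1,m_1)$ and $\lambda(b_2,m_2)$, which is a special case of stable equivalence.
\item If $b\in B_{m_1}\cap B_{m_2}$ with $m_1<m_2$, then geometrically $\lambda(b,m_2)$ is the disjoint union of $\lambda(b,m_1)$ with $m_2-m_1$ round unknotted circles arising from the extra ``straight'' strands to the right of $b$. Each such circle bounds an obvious small disc in the complement, and the discs are pairwise disjoint. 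Hence $\lambda(b,m_2)$ is a stabilisation of $\lambda(b,m_1)$, so the two are stably equivalent.
\end{itemize}
Since $\equiv$ is an equivalence relation, the conclusion follows.

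For the ``only if'' direction, suppose $L_1\equiv L_2$ where $L_i=\lambda(b_i,m_i)$. By definition there exist stabilisations $L'_1$ and $L'_2$ of $L_1$ and $L_2$ respectively such that $L'_1$ is ambient isotopic to $L'_2$. The central claim I would establish is: \emph{any stabilisation of $\lambda(b,m)$ is ambient isotopic to $\lambda(b,m+k)$ for some $k\geq 0$, where $b$ is viewed in $B_{m+k}$ through the natural inclusion}. Granting this, one obtains ambient isotopies $L'_1\simeq\lambda(b_1,m_1+k_1)$ and $L'_2\simeq\lambda(b_2,m_2+k_2)$, hence $\lambda(b_1,m_1+k_1)$ is ambient isotopic to $\lambda(b_2,m_2+k_2)$. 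Theorem \ref{markov} then gives $(b_1,m_1+k_1)\sim(b_2,m_2+k_2)$, and in particular $(b_1,m_1+k_1)\approx(b_2,m_2+k_2)$. Combining with the inclusion relations $(b_i,m_i)\approx(b_i,m_i+k_i)$ and transitivity of $\approx$ yields $(b_1,m_1)\approx(b_2,m_2)$.

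The main obstacle is proving the central claim above, which is genuinely topological rather than combinatorial. The strategy is to use the embedded discs $D_1,\dots,D_n$ bounding the added components $L''=L'_i\setminus L_i$. Since these discs are disjoint from $L_i$, an ambient isotopy supported in a neighbourhood of $\bigcup D_j$ shrinks the $\partial D_j$ to small round circles lying in a small ball disjoint from $L_i$. Place $L_i$ in its braid closure position for $(b_i,m_i)$ and push these small circles to the far right of the braid axis, so that each becomes an additional ``trivial'' closed strand of a braid closure diagram with $m_i+n$ strands. The resulting braid word in $B_{m_i+n}$ is precisely $b_i$ under the inclusion $B_{m_i}\hookrightarrow B_{m_i+n}$, so $L'_i$ is ambient isotopic to $\lambda(b_i,m_i+n)$, as required.
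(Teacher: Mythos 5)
Your proposal is correct and follows essentially the same route as the paper: the ``if'' direction via Markov's theorem together with the observation of Remark~\ref{remarkov} that adding strands adds split unknotted components, and the ``only if'' direction by isotoping the stabilisations to $\lambda(b_i,m_i+k_i)$ and then applying Theorem~\ref{markov}. The only difference is that you spell out the topological step (shrinking the bounding discs and pushing the trivial circles to the right of the braid closure) which the paper dismisses as ``easy to see.''
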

\begin{proof}
The fact that $(b_1,m_1)\approx (b_2,m_2)$ implies $\lambda(b_1,m_1) \equiv \lambda(b_2, m_2)$ follows from the Markov's Theorem and Remark ~\ref{remarkov}. For the converse, let $l_1$ and $l_2$ be two links stably equivalent to each other, such that $\lambda(b_1,m_1) = l_1$ and $\lambda(b_2, m_2) = l_2$. It is easy to see that, for some $k_1,k_2\geq 0$,  $\lambda(b_1, m_1+k_1)$ is isotopic to $\lambda(b_2,m_2+k_2)$ (we can in fact take one of $k_1$ and $k_2$ to be $0$). Then by Markov's theorem, $(b_1,m_1+k)\sim (b_2, m_2)$, hence $(b_1,m_1)\approx (b_2,m_2)$. 
\end{proof}
 
\section{Stable links and Infinite braids}\label{infbrds}

In the previous section, we recalled the well known formulation of knot theory in terms of braid groups. However, to formulate in terms of predicate calculus, we shall reformulate this in terms of a single braid group $B_\infty$. We shall do this by replacing the usual formulation of Markov's theorem by its mirror, which is Theorem ~\ref{altmarkov} above. 

\begin{definition}
The braid group $B_\infty$ is the group generated by the set $\{\sigma_i\}_{i\in \mathbb{N}}$ with the relations
\begin{enumerate}
\item $ \sigma_i \cdot  \sigma_j = \sigma_j \cdot  \sigma_i$, where $i,j\in \mathbb{N}, i\geq j+2$
\item $\sigma_i \cdot \sigma_{i+1} \cdot \sigma_i = \sigma_i \cdot  \sigma_{i+1}\cdot \sigma_i$, where $i \in \mathbb{N }$.
\end{enumerate}
\end{definition}

Note that each braid group $B_n$ can be regarded as a subset of $B_\infty$. Observe that there is an injective homomorphism $T:B_\infty\to B_\infty$, which we call the \emph{shift}, determined by
$$T(\sigma_i) =\sigma_{i+1}.$$

Let $\sigma=\sigma_1$ and $\bs=\sigma_1^{-1}$. Then observe that $\sigma_i=T^{i-1}(\sigma)$. Thus, the translates of $\sigma$  by the semi-group generated by $T$ generate $B_\infty$.

Consider the equivalence relation $\equiv$ on the group $B_\infty$ generated by
\begin{enumerate}
\item For $a,b\in B_\infty$, $aba^{-1}\equiv b$ .
\item For $b\in B_\infty$, $b\equiv \sigma T(b)$ .
\item For $b\in B_\infty$, $b\equiv \bs T(b)$.
\end{enumerate}

We formulate stable equivalence of links in terms of $B_\infty$. 

\begin{theorem}\label{infmarkov}
There is a surjective function $\Lambda: B_\infty\to\lks$ so that, for braids $b_1,b_2\in B_\infty$, $\Lambda(b_1) = \Lambda(b_2)$ if and only if $b_1\equiv b_2$.
\end{theorem}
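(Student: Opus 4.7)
The plan is to define $\Lambda$ by reducing to the construction of Section \ref{brdsKnts}. For $b \in B_\infty$, choose any $n$ with $b \in B_n \subset B_\infty$ and set $\Lambda(b) = [\lambda(b,n)] \in \lks$, the stable equivalence class. Well-definedness follows from Remark \ref{remarkov}: if $b \in B_{n_1} \cap B_{n_2}$, then $\lambda(b, n_1)$ and $\lambda(b, n_2)$ are stably equivalent. Surjectivity is immediate from Alexander's Theorem \ref{Alexander}, since every link $L$ is isotopic to $\lambda(b, m)$ for some $(b,m) \in \bds$, and then $\Lambda(b) = [L]$.

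The heart of the proof is the biconditional. For the forward direction, I would verify that each of the three generating relations of $\equiv$ on $B_\infty$ preserves the value of $\Lambda$. Conjugation $b \equiv aba^{-1}$ is handled by the first Markov move. For the relations $b \equiv \sigma T(b)$ and $b \equiv \bs T(b)$, writing $b = \prod \sigma_{i_k}^{\epsilon_k} \in B_n$ with $i_k \leq n-1$, we have $\sigma T(b) = \sigma_1 \prod \sigma_{i_k+1}^{\epsilon_k} \in B_{n+1}$, and the alternative Markov moves of Theorem \ref{altmarkov} imply that $\lambda(b, n)$ and $\lambda(\sigma T(b), n+1)$ are isotopic (hence stably equivalent); the case of $\bs T(b)$ is identical. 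For the converse, pick $n_i$ with $b_i \in B_{n_i}$; from $\lambda(b_1, n_1) \equiv \lambda(b_2, n_2)$ we obtain $(b_1, n_1) \approx (b_2, n_2)$ by Lemma \ref{stable}. Then each generating move of $\approx$ translates into a generator of $\equiv$: conjugation in $B_m$ gives relation (1); the two alternative Markov stabilizations become $b \equiv \sigma T(b)$ and $b \equiv \bs T(b)$ precisely because $\prod \sigma_{i_k+1}^{\epsilon_k} = T(b)$ in $B_\infty$; and the move $(b, m_1) \approx (b, m_2)$ is trivial in $B_\infty$ since both pairs represent the same element.

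The main subtlety is ensuring the indices line up when collapsing the family $\{B_n\}$ into the single group $B_\infty$. This is exactly what the mirror reformulation of Markov's theorem (Theorem \ref{altmarkov}) accomplishes: it replaces the classical stabilization $b \mapsto \sigma_m b$, whose form depends on the ambient $B_m$, by $b \mapsto \sigma_1 T(b)$, which uses only the fixed generator $\sigma$ and the intrinsic shift $T$. Once that reformulation is in place, the proof reduces to bookkeeping that the generators of $\approx$ on $\bds$ and of $\equiv$ on $B_\infty$ correspond in both directions.
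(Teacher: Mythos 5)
Your proposal is correct and follows essentially the same route as the paper: define $\Lambda$ by closing $b$ in any $B_n$ containing it (well-defined by Remark~\ref{remarkov}/Lemma~\ref{stable}), get surjectivity from Alexander's Theorem, and prove the biconditional by matching the generators of $\equiv$ on $B_\infty$ with the generators of $\approx$ on $\bds$ via Markov's Theorem and the mirror reformulation of Theorem~\ref{altmarkov}. Your explicit remark that $\sigma T(b)=\sigma_1\prod\sigma_{i_k+1}^{\epsilon_k}$ is exactly the index bookkeeping the paper leaves as ``easy to verify,'' so if anything your write-up is slightly more explicit at that step.
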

\begin{proof}
We begin by constructing the function $\Lambda$. For each $n>1$, there is a homomorphism $f_n:B_n\to B_{\infty}$ determined by $f_n(\sigma_i)=\sigma_{i}$. The function is well defined because the braid relations are preserved in $B_{\infty}$. These functions determine a function
$$f:\bds\to B_{\infty} $$
given by
$$f(b, m) = f_m(b)$$
Observe that
$$ f^{-1}(b)= \{(b, m)| b\in\ B_m\}$$
By lemma ~\ref{stable} , the links $\lambda(b, m_1)$ and $\lambda(b, m_2)$ are stably equivalent for $m_1$ and $m_2$ such that $b\in B_{m_1}$ and $b\in B_{m_2}$. Thus $\lambda$ and $f$ induce a function
 $$\Lambda: B_{\infty}\to \lks$$
 given by
$$\Lambda(b) = \lambda(f^{-1}(b))$$

The fact that the map is surjective follows from Alexander's Theorem. Now we prove that $\Lambda(b_1) = \Lambda(b_2) \Leftrightarrow b_1 \equiv b_2$. Consider $b_1, b_2 \in B_{\infty}$ such that $\Lambda(b_1) = \Lambda(b_2)$. Lemma ~\ref{stable} implies that for any $x\in f^{-1}(b_1)$ and $y\in f^{-1}(b_2)$, $x\approx y$. In order to prove that  $\Lambda(b_1) = \Lambda(b_2) \implies b_1 \equiv b_2$, it suffices to prove that for $x, y \in \bds$, $x\approx y \implies f(x)\equiv f(y)$. 
In terms of the generating set of the relation ($\approx$), it translates to proving the following
\begin{enumerate}
\item For $b\in B_{m_1},B_{m_2}$, $f(b, m_1) \equiv f(b, m_2)$ 
\item For $a,b\in B_{m}$, $f(b, m) \equiv f(aba^{-1}, m) $.
\item For $b\in B_{m}$, $f(b, m) \equiv f(\sigma_1 b, m+1) $
\item For $b\in B_{m}$, $f(b, m) \equiv f(\sigma_1^{-1} b, m+1) $.
\end{enumerate}

The relation $f(b, m_1) \equiv f(b, m_2)$  follows from the fact that $f(b_1, m_1) = f(b_1, m_2)$ and $\equiv$ is an equivalence relation.  $f(b, m) \equiv f(aba^{-1}, m) $ follows from the first axiom of generating set of $\equiv$.  The third and fourth conditions are easy to verify.

In order to prove that $b_1\equiv b_2 \implies \Lambda(b_1) = \Lambda(b_2)$, from Theorem 4.5 it suffices to check that for some $a_1\in f^{-1}(b_1)$ and $a_2\in f^{-1}(b_2)$, $a_1 \approx a_2$. Since Markov moves generate the equivalence relation $\equiv$ on $B_{\infty}$ , it suffices to check that if two elements are related to each other by  a Markov move then the elements in their inverse images are $\approx$ equivalent to each other. 

Consider the first Markov move for $B_{\infty}$, $b\equiv aba^{-1}$. For an appropriate $m$ such that $b, aba^{-1} \in B_m$, $(aba^{-1}, m) \sim (b, m)$. Thus from Theorem 4.5, $\lambda(aba^{-1}) = \lambda (b)$. Similarly, $(b, m) \sim (\sigma_1^{\pm 1}b, m+1)$. Thus $b_1\equiv b_2 \implies \Lambda(b_1) = \Lambda(b_2)$.
\end{proof}

\section{Knots as a canonical model}\label{kntsmodel}

From the above we know that equivalence between links can be established by checking if the corresponding elements in the braid group are related by a sequence of moves on $B_{\infty}$ induced by the equivalence relation $\equiv$. Now we prove that  the braid group is the canonical model for the link axioms and thus any model for these axioms can be used to distinguish knots.

\begin{theorem}\label{binfinitylink} $(B_\infty, T, \cdot ,\equiv, \sigma_1, \sigma_1^{-1})$ is a link model.

\end{theorem}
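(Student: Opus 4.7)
The plan is to verify that the structure $(B_\infty, T, \cdot, \equiv, \sigma_1, \sigma_1^{-1})$ satisfies each of the axioms listed in Section \ref{axioms}, proceeding group-by-group through the axiom list. Most axioms are either immediate from the group/homomorphism structure or direct restatements of a generating relation, so the proof is largely a verification; the only items requiring an argument beyond pointing at a definition are braid axiom (2) and the first Markov move.

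For the group axioms, associativity, identity behavior, and $\sigma \cdot \bar\sigma = \bar\sigma \cdot \sigma = 1$ all follow because $B_\infty$ is a group with identity $1$ and $\bar\sigma = \sigma_1^{-1}$ is literally the inverse of $\sigma = \sigma_1$. For the shift axioms, the point is that $T\co B_\infty \to B_\infty$ was defined in Section \ref{infbrds} as a homomorphism via $T(\sigma_i)=\sigma_{i+1}$, so $T(xy)=T(x)T(y)$ and $T(1)=1$ both hold. For the braid axioms, the first is immediate: $\sigma \cdot T(\sigma) \cdot \sigma = \sigma_1\sigma_2\sigma_1 = \sigma_2\sigma_1\sigma_2 = T(\sigma)\cdot\sigma\cdot T(\sigma)$ from the second defining relation of $B_\infty$.

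The one step that is not a one-liner is braid axiom (2), namely $\sigma \cdot T^2(b) = T^2(b) \cdot \sigma$ for every $b \in B_\infty$. I would prove this by induction on the length of a word representing $b$ in the generators $\sigma_i$. For a single generator $\sigma_i$, $T^2(\sigma_i) = \sigma_{i+2}$, which commutes with $\sigma_1$ by the first defining relation of $B_\infty$ (since $i+2 \geq 3 \geq 1+2$). For the inductive step, if $b = b' \sigma_i^{\pm 1}$ then $T^2(b) = T^2(b') \sigma_{i+2}^{\pm 1}$, and $\sigma_1$ commutes with each factor by the inductive hypothesis and the base case, hence with the product. The equivalence-relation axioms hold because $\equiv$ on $B_\infty$ was defined in Section \ref{infbrds} as the equivalence relation generated by three families of relations, hence is reflexive, symmetric, and transitive by construction.

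Finally, for the Markov move axioms: axioms (2) and (3) are exactly the second and third generating relations of $\equiv$, so they are built in. Axiom (1) requires a tiny translation: if $y\cdot z = 1$ in $B_\infty$, then $z = y^{-1}$, so $y\cdot x\cdot z = y x y^{-1}$, which is $\equiv$-related to $x$ by the first generating relation $a b a^{-1} \equiv b$ (taking $a=y$, $b=x$). This completes the verification, and I would present it as a short bulleted check-list mirroring the axiom list in Section \ref{axioms}, with the inductive argument for braid axiom (2) as the only non-trivial bullet.
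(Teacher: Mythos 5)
Your proposal is correct and follows essentially the same route as the paper: the only substantive verification in both is braid axiom (2), which the paper also handles by writing $b$ as a word $\prod_{k}\sigma_{i_k}^{\epsilon_k}$ and commuting $\sigma_1$ past each letter via the far-commutation relation, which is exactly your induction on word length. The remaining axioms, which you check explicitly (including the $y\cdot z=1\Rightarrow z=y^{-1}$ translation of the first Markov axiom), are the ones the paper dismisses as immediate from the definitions of $B_\infty$, $T$, and $\equiv$.
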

\begin{proof}
We begin by proving that $(B_\infty, T, \cdot ,\equiv, \sigma_1, \sigma_1^{-1})$ satisfies the second braid axiom. From the definitions of $B_{\infty}$ and $T$, it is easy to derive that $(B_\infty, T, \cdot ,\equiv, \sigma_1, \sigma_1^{-1})$ satisfies the other link axioms. Consider an arbitrarily chosen element $b\in B_{\infty}$. Since $B_{\infty}$ is generated by the set $\{\sigma_i\}_{i\in \mathbb{N}}$, $b$ can be represented in terms of generators as  $\prod_{k=1}^n{\sigma_{i_k}^{\epsilon_k}}$, where $i_k\in \mathbb{N}$ and $\epsilon_k$ is 1 or -1. This leads us to the following equality
$$\sigma_1 \cdot T^{2}(b) = \sigma_1 \cdot \prod_{k=1}^n{\sigma_{i_k+2}^{\epsilon_k}}$$ 
From the definition of  $B_{\infty}$, we know that  $\sigma_i$ and $\sigma_j$ commute with respect to the product operation if $|i -j|\geq 2$. This implies that for $i_k \in \mathbb{N}$, 
$$ \sigma_1 \cdot {\sigma_{{i_k}+2}^{\epsilon_k}} =  {\sigma_{i_k+2}^{\epsilon_k}} \cdot \sigma_1$$ 
which further implies that
$$ \sigma_1 \cdot \prod_{k=1}^n{\sigma_{{i_k}+2}^{\epsilon_k}} =  \prod_{k=1}^n{\sigma_{i_k+2}^{\epsilon_k}} \cdot \sigma_1$$ .
Since $b$ was an arbitrarily chosen element of $B_{\infty}$, it follows that
$$\sigma \cdot T^{2}(b) =  T^{2}(b) \cdot \sigma \quad \forall b\in B_{\infty} $$ .
\end{proof}

\begin{definition}[Canonical Model] 
For any signature $L$ and a set of sentences $\mathbb{T}$ in the language $L$, a structure $A$ is said to be a \emph{canonical model} if
\begin{itemize}
\item $A \models \mathbb{T}$
\item Every element of $A$ is of the form $t^A$, where $t$ is a closed term of $L$.
\item If $B$ is an $L$-structure and $B\models \mathbb{T}$, there is a unique homomorphism of structures $f:A\rightarrow B$.
\end{itemize}
\end{definition}

Now we prove that $(B_\infty, T, \cdot ,\equiv, 1, \sigma_1, \sigma_1^{-1})$ is a canonical model for the link axioms. In order to do so, consider a model of the link axioms $(S, T_1, *, \equiv', 1', \sigma_1', \bs_1')$ and a map $f :B_\infty \rightarrow S$ such that
 \begin{align*}
 f(1) &= 1'\\
 f(\sigma_1) &= \sigma_1'\\
 f(\sigma_i) &= T_1^{i-1}(\sigma_1')\\
 f(b_1 \cdot b_2) &= f(b_1)*f(b_2)
 \end{align*}
The last condition suffices to extend the function to $B_\infty$ using the image on the generating set. However it remains to be proved that it is well defined.
\begin{lemma} The map $f:B_{\infty} \rightarrow S$ is well defined.
\end{lemma}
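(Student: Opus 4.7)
The plan is to invoke the universal property of the presentation of $B_\infty$: since $f$ is specified on the generators $\sigma_i$ and extended multiplicatively, well-definedness reduces to checking that the images of the two sides of every defining relation agree in $S$. I first observe that the prescription must also fix the values on the inverse generators; the natural choice $f(\sigma_i^{-1}) := T_1^{i-1}(\bs'_1)$ is forced by the group axiom $\sigma \cdot \bs = \bs \cdot \sigma = 1$ holding in $S$, which makes $\bs'_1$ a two-sided inverse of $\sigma_1'$. Any element of $B_\infty$ is then a word in $\{\sigma_i, \sigma_i^{-1}\}$, and it suffices to verify in $S$ the three families of relations that together generate the group presentation of $B_\infty$: (i) the braid relation $\sigma_i\sigma_{i+1}\sigma_i = \sigma_{i+1}\sigma_i\sigma_{i+1}$; (ii) the commutation relation $\sigma_i\sigma_j = \sigma_j\sigma_i$ for $|i-j|\ge 2$; and (iii) the inverse relations $\sigma_i\sigma_i^{-1} = \sigma_i^{-1}\sigma_i = 1$.

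For (i), I would apply $T_1^{i-1}$ to both sides of the first braid axiom $\sigma_1' * T_1(\sigma_1') * \sigma_1' = T_1(\sigma_1') * \sigma_1' * T_1(\sigma_1')$; using that $T_1$ is a $*$-homomorphism (shift axiom~1), the two sides become $T_1^{i-1}(\sigma_1') * T_1^{i}(\sigma_1') * T_1^{i-1}(\sigma_1')$ and $T_1^{i}(\sigma_1') * T_1^{i-1}(\sigma_1') * T_1^{i}(\sigma_1')$, i.e.\ the $f$-images of the two sides of the braid relation. For (ii) assume $j \ge i+2$ (the other case is symmetric); instantiating the second braid axiom at $b := T_1^{j-i-2}(\sigma_1')$ gives $\sigma_1' * T_1^{j-i}(\sigma_1') = T_1^{j-i}(\sigma_1') * \sigma_1'$, and then applying $T_1^{i-1}$ together with the homomorphism property yields the claim. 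For (iii), applying $T_1^{i-1}$ to $\sigma_1' * \bs'_1 = \bs'_1 * \sigma_1' = 1'$ and using both that $T_1$ is a $*$-homomorphism and that $T_1(1') = 1'$ (shift axiom~2) gives the two inverse relations.

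The only delicate point is making sense of the multiplicative extension at all, which requires $S$ to be a monoid under $*$; this is supplied by the associativity and identity group axioms, which as universally quantified sentences hold for every element of $S$ (not merely for closed terms). Once that monoid structure is in hand, the three verifications above exhaust the defining relations of $B_\infty$ as a presented group, so the prescription determines a consistent value of $f$ on any word in the generators, and hence $f$ is well defined.
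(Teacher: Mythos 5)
Your proof is correct and takes essentially the same route as the paper: you verify the commutation relation by instantiating the second braid axiom at $b=T_1^{j-i-2}(\sigma_1')$ and the braid relation by applying the shift homomorphism $T_1^{i-1}$ to the first braid axiom, which is exactly the paper's argument. You are in fact slightly more careful than the paper, which checks only these two families of relations: your extra step of fixing $f(\sigma_i^{-1})=T_1^{i-1}(\bs_1')$ and checking the inverse relations via $T_1(1')=1'$ is genuinely needed, since the link axioms only make $S$ a monoid in which $\sigma_1'$ has the inverse $\bs_1'$, and this point is left implicit in the paper.
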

\begin{proof}
In order to prove that the map is well defined, it suffices to prove that 
\begin{enumerate}
\item $T_1^{i}(\sigma_1')*T_1^{j}(\sigma_1') = T_1^{j}(\sigma_1')*T_1^{i}(\sigma_1')$,  For $i,j$ such that $j\geq i+2$ 
\item $T_1^{i}(\sigma_1')*T_1^{i+1}(\sigma_1')* T_1^{i}(\sigma_1')$ = $T_1^{i+1}(\sigma_1')*T_1^{i}(\sigma_1')*T_1^{i+1}(\sigma_1') $ 
\end{enumerate}
Since $(S, T_1, *, \equiv', 1' \sigma_1', \bs_1')$ is a model of the link axioms,
\[ \sigma_1' \cdot T^2(b) = T^2(b) \cdot \sigma_1', \quad \forall b\in S \]
Consider two arbitrarily chosen natural numbers $i$ and $j$ such that $j-i-2\geq 0$. If we substitute $T_1^{j-i-2}(\sigma_1')$ for $b$ we get,
\[ \sigma_1' \cdot T_1^2(T_1^{j-i-2}(\sigma_1')) = T_1^2(T_1^{j-i-2}(\sigma_1')) \cdot \sigma_1'\]
Which further leads to the equality,
\[T_1^i(\sigma_1' \cdot (T_1^{j-i}(\sigma_1')) = T_1^i((T_1^{j-i}(\sigma_1')) \cdot \sigma_1')\]
From the homomorphism action we get,
\[T_1^i(\sigma_1') \cdot T_1^i(T_1^{j-i}(\sigma_1')) = T_1^i(T_1^{j-i}(\sigma_1')) \cdot T_1^i(\sigma_1')\]
\[T_1^i(\sigma_1') \cdot T_1^j(\sigma_1') = T_1^j(\sigma_1') \cdot T_1^i(\sigma_1')\] 
Thus condition (1) holds. From the first braid axiom it follows that,
\[ \sigma_1' \cdot T_1(\sigma_1') \cdot \sigma_1' = T_1(\sigma_1') \cdot \sigma_1' \cdot T_1(\sigma_1')\]
Given $i\in\mathbb{N}$, $T_1^i$ is a homomorphism. By applying $T_1^i$ to the both the sides of the above equation, 
\[T_1^{i}(\sigma_1')*T_1^{i+1}(\sigma_1')* T_1^{i}(\sigma_1') = T_1^{i+1}(\sigma_1')*T_1^{i}(\sigma_1')*T_1^{i+1}(\sigma_1')\]
Thus $f$ is well defined. 
\end{proof}

\begin{theorem} $(B_\infty, T, \cdot ,\equiv, 1, \sigma_1, \sigma_1^{-1})$ is a canonical model for link axioms. 
\end{theorem}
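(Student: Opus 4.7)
The plan is to verify, in order, the three defining clauses of a canonical model. The first clause, that $(B_\infty, T, \cdot, \equiv, 1, \sigma_1, \sigma_1^{-1})$ is a link model, is precisely Theorem~\ref{binfinitylink}, so nothing new is needed there. For the second clause, I would rely on the observation already recorded in Section~\ref{infbrds} that $B_\infty$ is generated by $\{\sigma_i\}_{i\in\mathbb{N}}$ and that $\sigma_i = T^{i-1}(\sigma)$ while $\sigma_i^{-1} = T^{i-1}(\bs)$. Hence every element of $B_\infty$ is a finite product in these symbols, that is, the interpretation of a closed term in the signature $(\cdot, T, 1, \sigma, \bs)$.

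The substantive work is the third clause, which asserts the existence of a unique homomorphism $f\colon B_\infty \to S$ into an arbitrary link model $(S, T_1, *, \equiv', 1', \sigma_1', \bs_1')$. I would take for $f$ the map already introduced before the last lemma and shown there to be well defined. It then remains to check that $f$ respects each piece of the signature. Sending the constants correctly is built in, with $f(\bs) = \bs_1'$ obtained from $\sigma \cdot \bs = 1$ together with the group axioms in $S$, which force $\bs_1'$ to be the inverse of $\sigma_1'$. Multiplication compatibility is by construction. For $T$, I would check $f \circ T = T_1 \circ f$ first on generators, where both sides evaluate to $T_1^{i}(\sigma_1')$ on $\sigma_i$, and then extend to arbitrary products using the axiom $T(x \cdot y) = T(x) \cdot T(y)$ in $S$.

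The main obstacle is the compatibility of $f$ with the predicate $\equiv$, since this is where the link axioms enter nontrivially. As $\equiv$ on $B_\infty$ is generated by Markov moves, the strategy is to push each generating move through $f$ using the matching axiom in $S$. The stabilisation moves $b \equiv \sigma T(b)$ and $b \equiv \bs T(b)$ translate directly into the second and third Markov axioms of $S$. The conjugation move $aba^{-1} \equiv b$ is the most delicate, but is handled by the first Markov axiom of $S$ with $y = f(a)$ and $z = f(a^{-1})$, once one notes that $y * z = f(a \cdot a^{-1}) = 1'$. Transitivity of $\equiv'$ then propagates this to all $\equiv$-related pairs. Uniqueness is automatic at the end: any $L$-homomorphism $g \colon B_\infty \to S$ must agree with $f$ on the constants $1, \sigma, \bs$ and respect $\cdot$ and $T$, so by the second clause $g$ and $f$ coincide on every element.
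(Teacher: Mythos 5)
Your proposal is correct and follows essentially the same route as the paper: cite Theorem~\ref{binfinitylink} for the first clause, use the generation of $B_\infty$ by $T^{i-1}(\sigma)$ and $T^{i-1}(\bs)$ for the second, take the previously defined (and well-defined) map $f$, verify $f\circ T = T_1\circ f$ and push each generating Markov move through $f$ via the matching Markov axiom in $S$, and obtain uniqueness from agreement on constants together with compatibility with $\cdot$ and $T$. Your explicit remarks that $f(\bs)=\bs_1'$ is forced and that the conjugation move uses $y*z=f(a\cdot a^{-1})=1'$ merely make precise what the paper states implicitly.
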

\begin{proof}
From Theorem ~\ref{binfinitylink}, we know that $(B_\infty, T, \cdot ,\equiv, \sigma_1, \sigma_1^{-1})$ is a model of the link axioms and thus the first axiom in the definition of a canonical model holds true. Since every element in $B_\infty$ is of the form $\prod_{k=1}^n T^{i_k}(\sigma_1^{\epsilon_k})$, where $i_k \in \mathbb{N}\cup\{0\}$ and  $\epsilon_k$ is 1 or -1, it follows that every element is a closed term of $L$. Let $\prod_{k=1}^n{\sigma_{i_k}^{\epsilon_k}}$ be an arbitrarily chosen element of $B_\infty$. From the definition of $T$ it follows that
\begin{align*}
f\circ T(\prod_{k=1}^n{\sigma_{i_k}^{\epsilon_k}}) &= f(\prod_{k=1}^n{\sigma_{i_k+1}^{\epsilon_k}})\\
\end{align*}
Since $f$ is a group homomorphism,
\begin{align*}
f(\prod_{k=1}^n{\sigma_{i_k+1}^{\epsilon_k}})&=  \prod_{k=1}^n f({\sigma_{i_k+1}^{\epsilon_k}})\\
  &= \prod_{k=1}^n T^{{i_k}+1}_1(\sigma_{1}'^{\epsilon_k})\\
&= \prod_{k=1}^n T_1\circ T_1^{i_k}({\sigma_{1}'^{\epsilon_k}}) \\
 &= T_1\circ f(\prod_{k=1}^n{\sigma_{i_k}^{\epsilon_k}})  \\
\end{align*}
Which implies that
\begin{align*}
f\circ T &= T_1\circ f
\end{align*}
In order to prove that $f$ is a homomorphism of models, it now suffices to prove that if two elements are related by any of the generating relations of the stable equivalence, then their images under $f$ are related to each other. Since $(S, T_1, * , \equiv', 1, \sigma'_1, \bs_1')$ satisfies the Markov move axioms and $f$ preserves multiplication and identity, 
$$x, y, z, \in B_{\infty}, ((y\cdot z = 1) \implies (f(y \cdot x \cdot z) \equiv' f(x)))$$
From the equality it follows that $f\circ T = T_1\circ f$, 
$$ \sigma'_1*T_1(f(x)) = f(\sigma_1 \cdot T(f(x)))$$
and
$$\bs'_1*T_1(f(x)) = f(\sigma_1^{-1} \cdot T(f(x)))$$
hence it follows that
$$ f(x) \equiv' f(\sigma_1 \cdot T(f(x)))$$ 
 
$$ f(x) \equiv' f(\sigma_1^{-1} \cdot T(f(x)))$$

The uniqueness of $f$ remains to be proved. Any homomorphism $g$ between the given models maps $\sigma_1$ to $\sigma_1'$ and has to satisfy the condition $g\circ T$ = $T_1\circ g$. By applying induction, it follows that $g\circ T^i(\sigma_1) = T^i(g(\sigma_1)) = T^i(f(\sigma_1))$ for every $i\in \mathbb{N}$. Thus for any given homomorphism $g$ and any given $i\in \mathbb{N}$, image of $g(\sigma_i)$ is $T_1^i(f(\sigma_1))$. Since the elements of the set $\{\sigma_i\}_{i\in \mathbb{N}}$ generate $B_{\infty}$, it follows that $g = f$. 
\end{proof}

\section*{Appendix: Topological background} \label{append}

\subsection{Knots and Braids}

In order to understand how the above axioms constitute a model of the knots, we introduce the following conceptual apparatus through which knots can be reduced to braid groups.

\begin{definition}[Braid Diagrams]
 A braid diagram is a set $D\subset \mathbb{R}\times I$ split into topological intervals called the strands of D such that
\begin{enumerate}
 \item The projection $\mathbb{R}\times I\rightarrow I$ maps each strand homeomorphically onto I.
\item Every point of $\mathbb{N}\times \{0,1\}$ is an end point of a unique strand.
\item\label{trans} Every point of $\mathbb{R}\times I$ belongs to at most two strands. At each intersection point of two strands, these strands meet transversally. At every intersection, one of the intersecting strands is labelled 'undergoing' and the other strand is labelled 'overgoing'. 
\end{enumerate}
\end{definition}
\begin{remark}
Transversality  in condition ~\ref{trans} means that in a neighbourhood of a crossing, up to homeomorphism, D is like the set $\{(x,y)| xy=0\}$ . Compactness of strands and condition(iii) ensures that the number of double points are finite.
\end{remark}

Two braid diagrams $D$ and $D'$ are said to be \textit{isotopic} if there is a continuous map $F:D\times I \rightarrow \mathbb{R}\times I$ such that for each $s\in I$, $D_s = F(D\times \{s\})\subset \mathbb{R}\times I$ is a braid diagram preserving $D_0 = D$ and $D_1 = D'$. It is understood that F maps the crossings of D to crossings of $D_s$ while preserving the information about when a strand goes under or over the other strand.An example of braid diagram isotopy is given in the figure below.

\begin{center}
\includegraphics[scale=0.40]{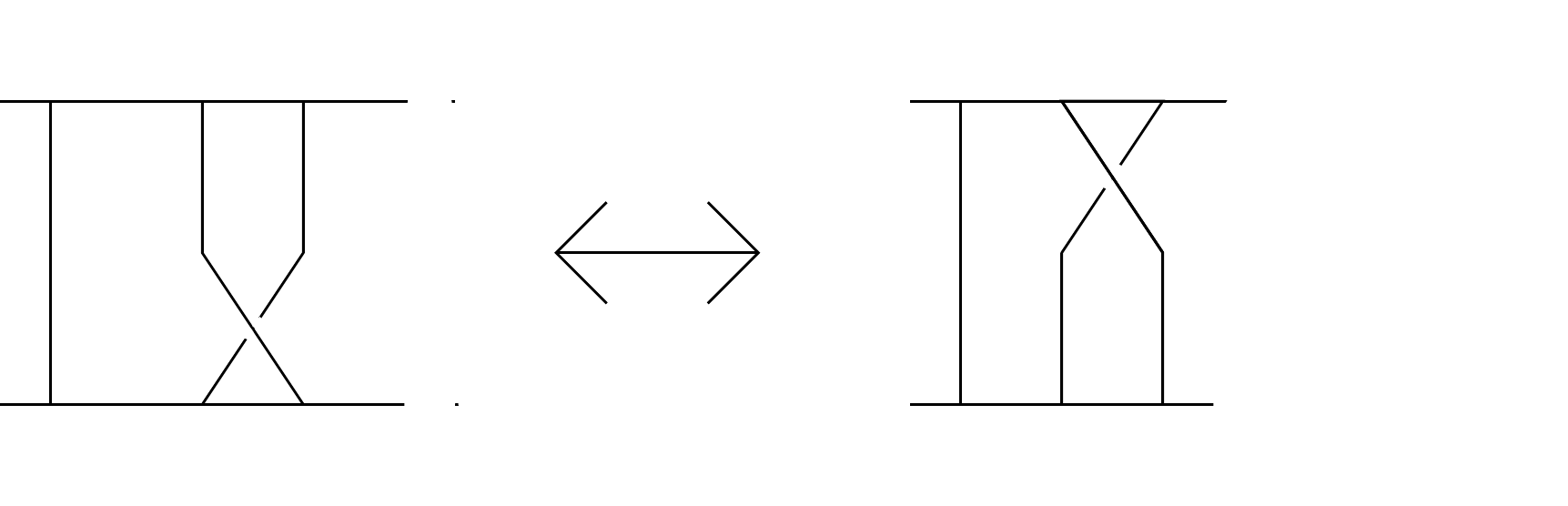}
\end{center}

The term braid diagram shall be here on used to denote the isotopy class of a braid diagram. A product $*$ on the isotopy classes of braid diagrams is defined by associating the diagram $D_1*D_2$ to the diagram obtained by placing $D_1$ on top of $D_2$ and 'squeezing' the resultant diagram into $\mathbb{R}\times I $, i.e., (x, t)$\rightarrow$(x, t/2). It is easy to see that the product is well defined and the braid diagram $e = \cup_{k=1}^n \{(k, t)| t\in [0,1]\}$, is the identity. The product is associative because any element of the form $(a*b)*c$ can be isotoped to $a*(b*c)$ through a continuous map defined as follows. 

$$F(x,t,\theta)=\begin{cases}
(1-\theta)(x, t) + \theta(x, 2t), & \quad t\in [0,1/4],\\
(1-\theta)(x, t) + \theta(x, t + 1/4), & \quad t \in [1/4,1/2],\\
(1-\theta)(x, t) + \theta(x, (t+1)/2), & \quad t\in [1/2, 1].\\
\end{cases}$$

To define a map from Braid Group $B_n$ to the set of n-braid diagrams $\mathtt{B_n}$ upto isotopy, we define an equivalence relation on $\mathtt{B_n}$ generated by a set of moves called the \textit{reidemeister moves}.

\begin{definition}
[Reidemeister Moves]
The transformations of the braid diagrams $\Omega_2, \Omega_3$ as shown in the figure below and their inverses $\Omega_2^{-1}, \Omega_3^{-1}$ are called \textit{Reidemeister moves}. 

\begin{center}
\includegraphics[scale=0.30]{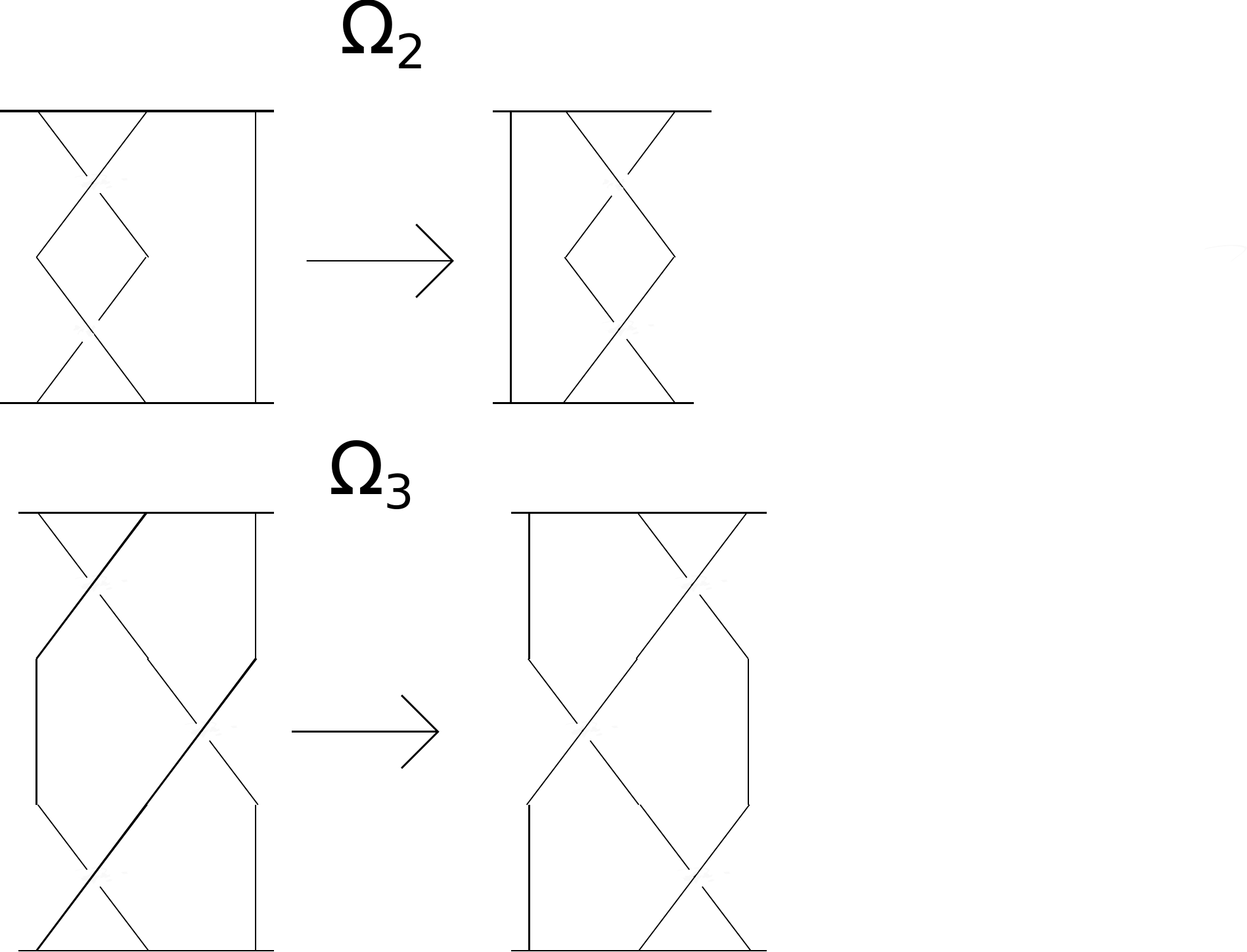}
\end{center}

Two distinct braid diagrams are said to be \textit{R-equivalent} if they are related to each other by a sequence of Reidemeister moves.
\end{definition}

Observe that if there exist braid diagrams a, b, c and d such that a is R- equivalent to c and b is R-equivalent to d, $a*b$ is R-equivalent to $c*d$. It follows that the product $*$  on $\mathtt{B_n}$, the isotopy classes of braid diagrams upto R-equivalence, is well defined and $\mathtt{B}_n$ is a monoid. Consider the braid diagrams $\overline{\sigma}_i^+$ and $\overline{\sigma}_i^-$ as in the figure below.

\begin{center}
\includegraphics[scale=0.30]{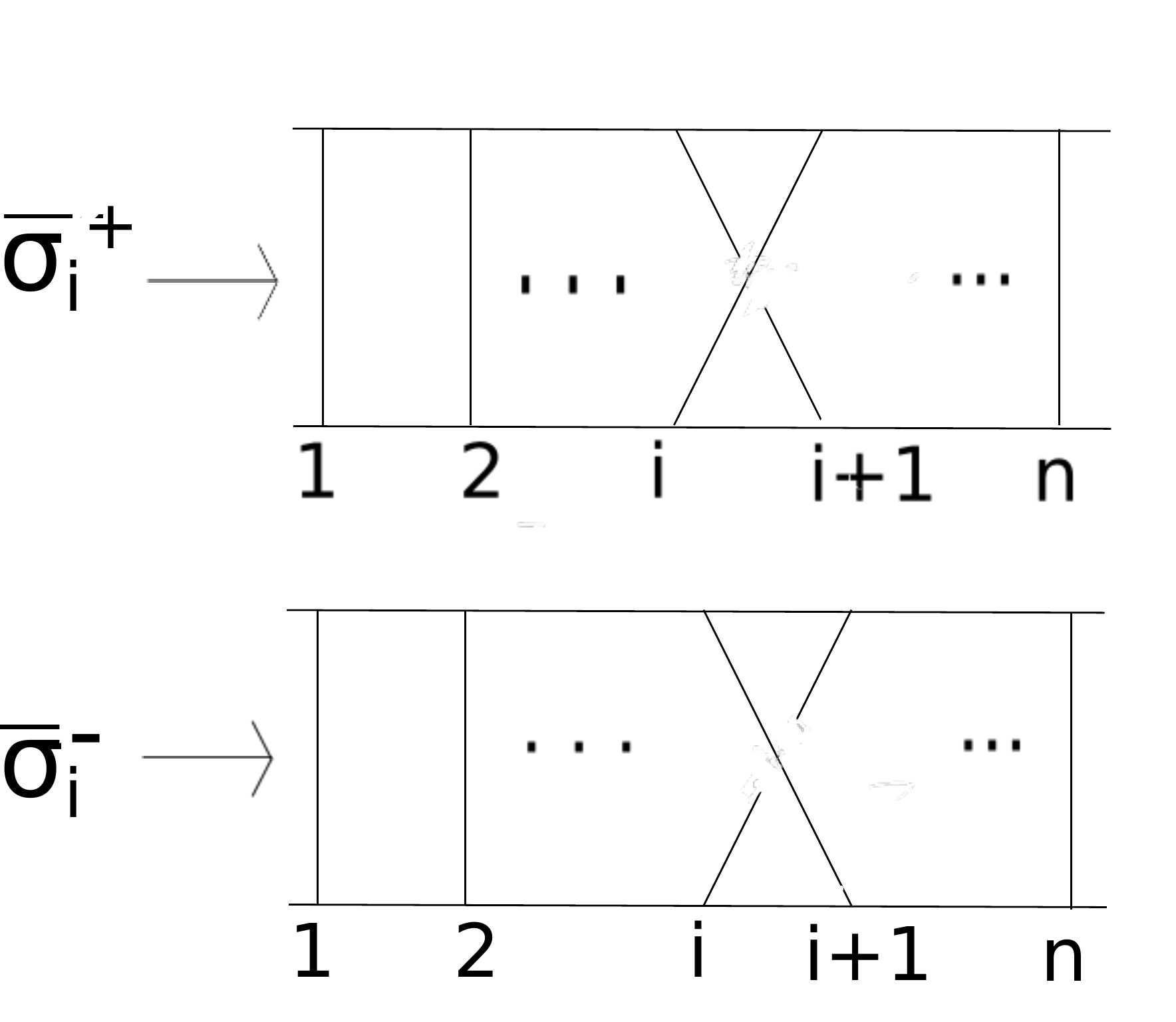}
\end{center}

The following result implies that the elementary braid diagrams  $\overline{\sigma}_i^+$ and $\overline{\sigma}_i^-$ generate $\mathtt{B}_n$ and $(\mathtt{B}_n, *)$  is a group.

\begin{lemma} R-equivalent classes of elementary braid diagrams $\overline{\sigma}_i^+$ and $\overline{\sigma}_i^-$ generate $\mathtt{B}_n$ as a monoid. Further each element $\beta\in \mathtt{B}_n$ has a two sided inverse $\beta^{-1}$ in $\mathtt{B}_n$.
\end{lemma}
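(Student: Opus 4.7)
The plan is to prove both statements by a horizontal-slicing argument based on the finiteness of crossings in any braid diagram.

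For the first claim, I will fix an arbitrary diagram $D \in \mathtt{B}_n$. Since $D$ has only finitely many double points, after a small isotopy (which does not change its R-equivalence class) I can arrange that all crossings lie at distinct heights. I then choose levels $0 = t_0 < t_1 < \cdots < t_m = 1$ such that each slab $\mathbb{R} \times [t_{k-1}, t_k]$ contains exactly one crossing and no crossing lies on any level $\mathbb{R} \times \{t_k\}$. Rescaling each slab back to $\mathbb{R} \times I$ via $(x, t) \mapsto (x, (t - t_{k-1})/(t_k - t_{k-1}))$ produces diagrams $D_1, \ldots, D_m$ each with a single crossing. Using that each strand projects homeomorphically onto $I$, the strands in $D_k$ consist of $n-2$ nearly vertical arcs together with two strands crossing transversally at a single point, and a standard planar isotopy will straighten these into either $\overline{\sigma}_{i_k}^+$ or $\overline{\sigma}_{i_k}^-$ according to the over/under label. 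From the definition of $*$, the diagram $D$ is isotopic to $D_1 * D_2 * \cdots * D_m$, giving the required factorisation in $\mathtt{B}_n$.

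For the second claim, I will use the first part to write $\beta = \overline{\sigma}_{i_1}^{\epsilon_1} * \cdots * \overline{\sigma}_{i_m}^{\epsilon_m}$ and set
\[ \beta^{-1} := \overline{\sigma}_{i_m}^{-\epsilon_m} * \cdots * \overline{\sigma}_{i_1}^{-\epsilon_1}. \]
The Reidemeister move $\Omega_2$ is precisely the statement that $\overline{\sigma}_i^+ * \overline{\sigma}_i^- = \overline{\sigma}_i^- * \overline{\sigma}_i^+ = e$ as R-equivalence classes in $\mathtt{B}_n$. Applying this cancellation successively from the innermost pair outward then shows that both $\beta * \beta^{-1}$ and $\beta^{-1} * \beta$ are R-equivalent to $e$, yielding the two-sided inverse.

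The main obstacle is in the first part: verifying rigorously that a slab containing exactly one crossing is isotopic to a single elementary generator. The technical subtlety is that although each strand projects homeomorphically onto the height interval, the horizontal positions of the endpoints at levels $t_{k-1}$ and $t_k$ need not coincide with the standard integer positions used to define $\overline{\sigma}_i^\pm$. The idea is to construct an explicit planar isotopy of the slab that moves endpoints to standard positions while keeping the crossing a single transverse point, using that away from a neighbourhood of the crossing each strand is a graph over $I$ and can be pushed to a vertical segment. A separate routine check is that if the rescaling produces a slab containing no crossing, the resulting diagram is R-equivalent to the identity and so does not contribute to the product.
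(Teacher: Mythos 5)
Your proposal is correct. Note that the paper states this lemma without proof (it appears as standard background in the appendix, following the Kassel--Turaev treatment of braid diagrams), and your argument --- isotope so that crossings occur at distinct heights, slice into slabs each containing one crossing, identify each slab with an elementary diagram $\overline{\sigma}_{i}^{\pm}$, and obtain the two-sided inverse by reversing the word and cancelling adjacent pairs $\overline{\sigma}_i^{+}*\overline{\sigma}_i^{-}$ via $\Omega_2$ --- is exactly the standard proof the paper implicitly relies on; the endpoint-normalisation issue you flag (intermediate slice levels not meeting the strands at the standard points) is genuine but routine, and your observation that the two crossing strands in a one-crossing slab are necessarily adjacent (since any strand trapped between them could never be crossed) is the key point making the identification with an elementary generator work.
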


The maps  $\phi_{1_{n_\epsilon}}: B_n \to \mathtt{B}_n$ such that $\phi_{1_{n_\epsilon}}(\sigma_i) = \overline{\sigma}^{\epsilon}_i$,  are well defined because the first axiom of braid groups corresponds to isotopy of braid diagrams and the second axiom corresponds to the second Reidemeister move. The following result says that it is an isomorphism of groups. 

\begin{theorem}
$\phi_{1_{n_\epsilon}}$ is an isomorphism of groups. 
\end{theorem}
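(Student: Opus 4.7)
My plan is to verify the three standard requirements for a group isomorphism: $\phi = \phi_{1_{n_\epsilon}}$ is a well-defined group homomorphism, it is surjective, and it is injective.

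For well-definedness, I would check that the two defining relations of $B_n$ hold between the images $\overline{\sigma}_i^{\epsilon}$ in $\mathtt{B}_n$. As the paper notes, the far-commutativity $\sigma_i\sigma_j=\sigma_j\sigma_i$ for $|i-j|\geq 2$ corresponds to a planar isotopy of braid diagrams (slide one crossing past another along the $t$-axis), so it already holds at the level of isotopy classes, before quotienting by Reidemeister moves. The braid relation $\sigma_i\sigma_{i+1}\sigma_i=\sigma_{i+1}\sigma_i\sigma_{i+1}$ is precisely the move $\Omega_3$, and $\sigma_i\sigma_i^{-1}=1=\sigma_i^{-1}\sigma_i$ is $\Omega_2$. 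Thus $\phi$ descends to the quotient by the braid relations, giving a homomorphism $B_n\to\mathtt{B}_n$. Surjectivity follows immediately from the preceding lemma: every $\beta\in\mathtt{B}_n$ is a product of the generators $\overline{\sigma}_i^{\pm}$, each of which is in the image of $\phi$.

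The main obstacle, and the step I expect to require real content, is injectivity, since the forward direction only uses that $B_n$'s relations are realised diagrammatically; it says nothing about whether extra diagrammatic identities force additional relations. My preferred route is the classical Artin representation. I would construct a homomorphism $\rho\co\mathtt{B}_n\to\mathrm{Aut}(F_n)$, where $F_n=\la x_1,\dots,x_n\ra$ is the free group, by reading off, for each braid diagram $D$, the automorphism that records how the loops around the $n$ base points are permuted and conjugated as one traces the strands of $D$ from top to bottom. One checks that $\rho$ is invariant under diagram isotopy and under $\Omega_2$, $\Omega_3$ (a short local computation at each move), so $\rho$ is well defined on $\mathtt{B}_n$. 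The composite $\rho\circ\phi\co B_n\to\mathrm{Aut}(F_n)$ is the standard Artin representation, which is known to be faithful. Hence if $\phi(b)=1$, then $\rho(\phi(b))=\mathrm{id}$, and faithfulness of the Artin representation gives $b=1$ in $B_n$.

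An alternative I would mention (in case one wishes to avoid citing faithfulness as a black box) is to build a two-sided inverse $\psi\co\mathtt{B}_n\to B_n$ directly: given a braid diagram, perturb it to put the crossings at distinct heights $t_1<t_2<\dots<t_k$, record at each $t_j$ the index $i_j$ of the crossing strands (after sorting by $x$-coordinate at that height) together with the sign $\epsilon_j$, and set $\psi(D)=\prod_j\sigma_{i_j}^{\epsilon_j}$. Invariance of $\psi$ under isotopy follows from a case analysis on what happens when two crossings swap heights (yields a far-commuting relation in $B_n$, using the fact that after the swap the crossing indices can differ by at least $2$ or can be handled by braid/commutation relations), and invariance under $\Omega_2$, $\Omega_3$ is immediate since these are literally the defining relations of $B_n$. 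Then $\psi\circ\phi=\mathrm{id}_{B_n}$ on generators, so $\phi$ is injective. Either route completes the proof.
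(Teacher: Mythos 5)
Your outline is essentially correct, but note that the paper itself offers no proof of this theorem: it is stated in the Appendix as classical background (the cited source is Kassel's book on braid groups), with only the remark before the statement addressing well-definedness. Your proposal therefore supplies the standard textbook argument rather than diverging from anything in the paper. Your treatment of well-definedness is in fact more accurate than the paper's remark, which attributes the braid relation to the second Reidemeister move; as you say, $\sigma_i\sigma_{i+1}\sigma_i=\sigma_{i+1}\sigma_i\sigma_{i+1}$ is $\Omega_3$, $\sigma_i\sigma_i^{-1}=1$ is $\Omega_2$, and far-commutation is a diagram isotopy. Surjectivity from the preceding lemma is fine. For injectivity, your second route (reading off a word $\prod_j\sigma_{i_j}^{\epsilon_j}$ from a diagram with crossings at distinct heights and checking invariance) is the standard self-contained proof; the key case is sound, since two crossings exchanging heights in an isotopy of braid diagrams necessarily involve four distinct strands (a monotone strand occupies one $x$-value per height), so the indices satisfy $|i-j|\geq 2$ and only far-commutation is needed --- your hedge ``or can be handled by braid/commutation relations'' is unnecessary. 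The one caution concerns your first route: many standard proofs that the Artin representation $B_n\to\mathrm{Aut}(F_n)$ is faithful identify $B_n$ with the mapping class group of the punctured disc, and that identification typically rests on the very presentation theorem you are trying to prove; to avoid circularity you must cite or give a proof of faithfulness for the abstractly presented group. With that caveat, either route completes a correct proof of a result the paper deliberately leaves to the literature.
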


To construct Links from braid diagrams, we define the following set of objects in $\mathbb{R}^2\times I$ called Geometric Braids.

\begin{definition}
[Geometric Braids]
A geometric braid on $n\geq 2$ strings is a set $b \subset \mathbb{R}^2\times I$ formed by the $n$ disjoint topological intervals called the \textit{strings} of $b$, such that
\begin{enumerate}
 \item $\pi_3: \mathbb{R}^2\times I\rightarrow I$ ,i.e., the projection onto I, maps each string homeomorphically onto I.
\item $b \cap (\mathbb{R}^2\times \{0\}) = \{(k,0,0)\}_{k=1}^n$
\item $b \cap (\mathbb{R}^2\times \{1\}) = \{(k,0,1)\}_{k=1}^n$
\end{enumerate}

Two geometric braids $b$ and $b'$ are said to be \textit{isotopic} if there is a continuous map, $F:b\times I \rightarrow R^{2}\times I$ such that
\begin{enumerate}
 \item $F_s: b\rightarrow \mathbb{R}^2\times I$ sending $x\in b$ to F(x, s) is an embedding, whose image is a geometric braid on n strings.
\item $F_0 = id_0:b\rightarrow b$
\item $F_1(b) = b'$
\end{enumerate}
\end{definition}

\begin{theorem}
 F extends to an isotopy of $\mathbb{R}^2\times I$ which is identity on boundaries.
\end{theorem}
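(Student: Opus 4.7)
The plan is to deduce the extension from the \emph{ambient isotopy extension theorem} for compact submanifolds, after a preliminary adjustment that makes the given isotopy constant near $\partial b$, followed by a cutoff that makes the resulting ambient isotopy trivial on a neighborhood of $\partial(\mathbb{R}^2\times I)$.

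First I would observe that $b$ is a compact $1$-dimensional submanifold of $\mathbb{R}^2\times I$ whose boundary $\partial b = \{(k,0,0)\}_{k=1}^n \cup \{(k,0,1)\}_{k=1}^n$ lies in $\partial(\mathbb{R}^2\times I)$ and is fixed pointwise by $F_s$ (this is built into the definition of a geometric braid isotopy). Using continuity of $F$ and the fact that each $F_s$ fixes $\partial b$, I would damp $F$ along a collar of $\partial b$ in $b$ by multiplying by a smooth bump function in the $s$-direction and tapering along each string, so as to arrange that after this modification $F_s$ is the identity on $b\cap V$ for all $s\in I$, where $V = (\mathbb{R}^2\times[0,\varepsilon]) \cup (\mathbb{R}^2\times[1-\varepsilon,1])$ is a small product collar of the boundary. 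The modified isotopy is still an isotopy of braids because the braid condition is preserved on the complement of $V$ by construction and holds trivially on $V$.

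Next I would invoke the ambient isotopy extension theorem (Thom--Cerf--Palais in the smooth category, Edwards--Kirby in the topological category; see, e.g., Hirsch, \emph{Differential Topology}, Chapter~$8$): any isotopy of a compact submanifold of a manifold extends to a compactly supported ambient isotopy of the ambient manifold starting at the identity. Applied to $b\subset\mathbb{R}^2\times I$ (or, more cautiously, to $b\cap(\mathbb{R}^2\times[\varepsilon,1-\varepsilon])$, which is a properly embedded submanifold whose boundary lies in $\mathbb{R}^2\times\{\varepsilon,1-\varepsilon\}$ and is stationary under the adjusted isotopy), this produces an ambient isotopy $H_s$ with compact support, $H_0=\mathrm{id}$, and $H_s|_b = F_s$.

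Finally, to enforce identity on all of $\partial(\mathbb{R}^2\times I)$, I would truncate the time-dependent vector field $X_s$ generating $H_s$ by a bump function $\chi\colon\mathbb{R}^2\times I\to[0,1]$ equal to $1$ on a compact neighborhood of the track of the isotopy in the interior and equal to $0$ on $V$ (which is possible precisely because Step~$1$ placed that track away from $V$). The flow of $\chi X_s$ is then an ambient isotopy of $\mathbb{R}^2\times I$ that is the identity on $V\supset\partial(\mathbb{R}^2\times I)$, has compact support, and still extends $F$. The step I expect to be the main obstacle is the first: cleanly damping $F_s$ so that it becomes constant in a collar of $\partial b$ while preserving the rigid braid condition (each string must project homeomorphically onto $I$). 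Because of that rigidity, the damping has to be carried out intrinsically along $b$, using a collar of $\partial b$ in $b$ together with the continuity of $F$ in $s$; once this is set up, both the isotopy extension theorem and the bump-function truncation are essentially formal.
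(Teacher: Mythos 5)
The paper gives no proof of this statement at all --- it is quoted topological background (essentially Theorem 1.10 of Kassel--Turaev's \emph{Braid Groups}) --- so your proposal has to stand on its own, and as written it has two genuine gaps. First, the damping step, which you yourself flag as the main obstacle, is never actually carried out, and it is not routine: the natural construction $\tilde F_s(x)=F_{\rho(x)\,s}(x)$, with $\rho$ a cutoff along a collar of $\partial b$ in $b$, is a pointwise time-reparametrisation of an isotopy of embeddings, and for fixed $s$ it need not be injective nor have image a geometric braid --- different points of a string are moved by different time-slices of $F$ and can collide with one another or with other strings, and monotonicity of the strings can be destroyed. Second, and more decisively, even granting such a $\tilde F$, the ambient isotopy produced by the isotopy extension theorem extends $\tilde F$, not $F$; the theorem asks for an ambient isotopy of $\mathbb{R}^2\times I$ restricting to $F$ itself on $b$, so your concluding claim that the truncated flow ``still extends $F$'' is unjustified, and repairing it requires a further correcting isotopy supported near the string endpoints --- which is exactly the boundary difficulty you introduced the damping to avoid. (Your cutoff of the generating vector field is salvageable: since $F_s$ fixes the endpoints and is injective, interior points of strings stay at heights in $(0,1)$, so by compactness the track of the part of $b$ at height $\geq\varepsilon$ stays a definite distance from the boundary planes; but this only helps once the first two issues are resolved.)

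There is also a category mismatch worth noting: in this paper the strings and the isotopy $F$ are merely continuous (a string is only required to project homeomorphically onto $I$), so the smooth isotopy extension theorem does not apply directly, and the topological version (Edwards--Kirby) needs local flatness, which is not among the hypotheses. The standard argument in this setting instead exploits monotonicity of the strings: each level $\mathbb{R}^2\times\{t\}$ meets every braid in exactly $n$ points, so the braid isotopy yields a two-parameter family of configurations of $n$ distinct points in the plane, and one extends such families to ambient isotopies of $\mathbb{R}^2$ (identity outside a large disc), continuously in the parameters, assembling them level by level into an ambient isotopy of $\mathbb{R}^2\times I$ that is the identity on $\mathbb{R}^2\times\{0,1\}$. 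Alternatively, if you wish to keep your route, work in a smooth or PL setting and invoke a relative form of isotopy extension for neat (properly embedded) submanifolds whose isotopy fixes $\partial b$; that version fixes $\partial(\mathbb{R}^2\times I)$ pointwise and removes the need to modify $F$ at all.
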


The map $i:\mathbb{R}\times I -\mathbb{R}^2\times I$ given by $i(x,t)=(x,0,t)$ embeds braid diagrams in $R^2\times I$. Pushing the undergoing strand in a small neighbourhood of every crossing into $\mathbb{R}\times (0, \infty)\times I$ by appropriately increasing the second co-ordinate of the strand while keeping the first and third constant, one obtains a geometric n-braid. Observe that the geometric braids constructed from isotopic braid diagrams are isotopic.
\begin{center}
\includegraphics[scale=0.35]{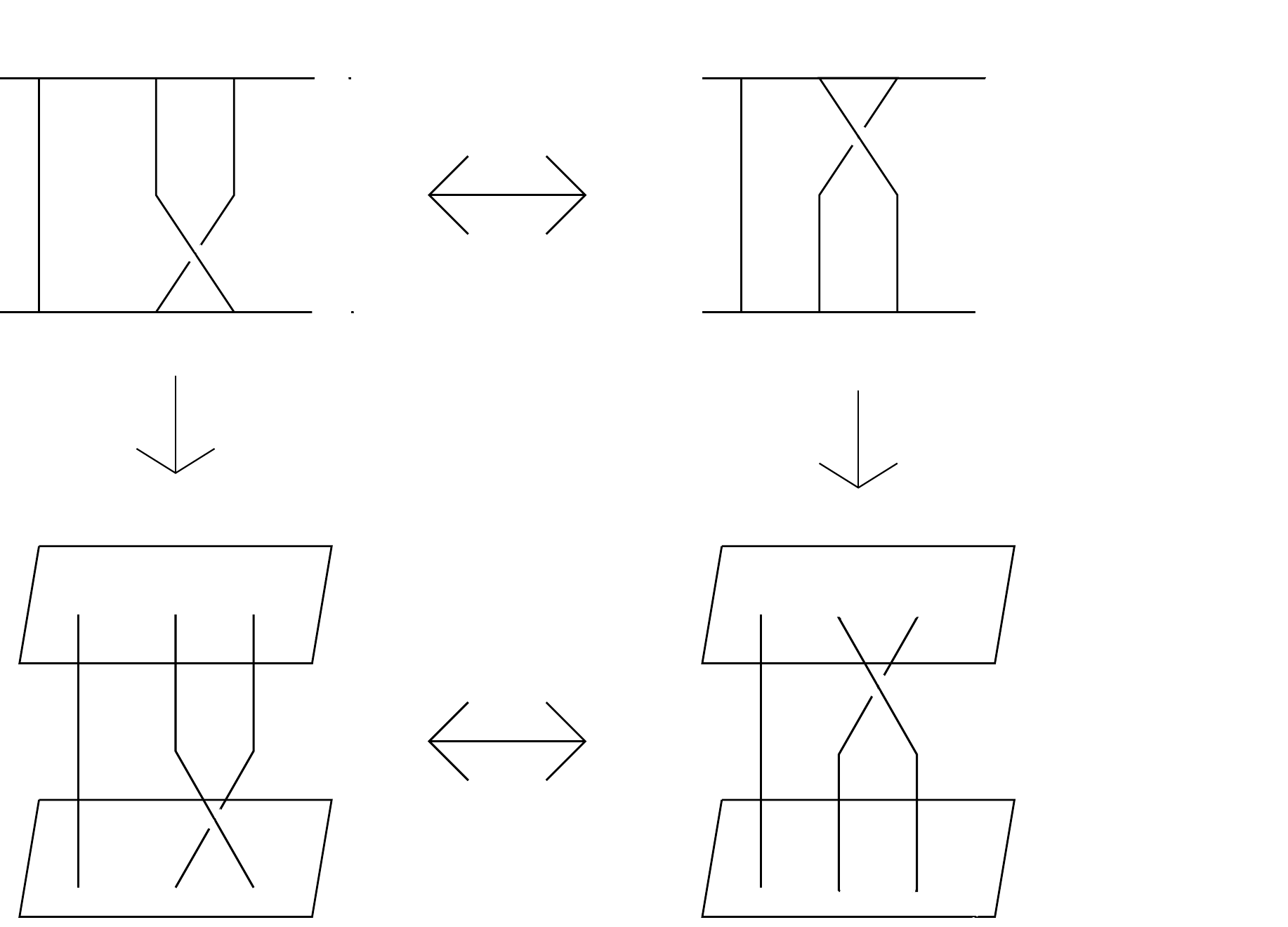}
\end{center}
The above construction induces a map $\phi_{2_n}$ from the isotopy classes of n-braid diagrams to $\mathbb{B}_n$, the set of geometric n-braids upto isotopy. 
Projecting  a geometric braid onto its first and third co-ordinates and marking the intersecting strand with greater value of the second co-ordinate in a neighbourhood of intersection as undercrossing and the other as overcrossing in the neighbourhood, we obtain a braid diagram. The image of this braid diagram under $\phi_{2_n}$ is isotopic to the original geometric braid. Thus the map $\phi_{2_n}$ is surjective. 

The following theorem makes explicit the relationship between braid diagrams and geometric braids.
\begin{theorem} $\phi_{2_n}(b_1) =\phi_{2_n}(b_2)$ if and only if $b_1$ is R-equivalent to $b_2$.
\end{theorem}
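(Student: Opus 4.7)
The proof splits naturally into the two directions of the biconditional, with quite different flavours.

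For the easy direction, that $b_1$ R-equivalent to $b_2$ implies $\phi_{2_n}(b_1)=\phi_{2_n}(b_2)$, I would check that each generator of R-equivalence lifts to an isotopy of geometric braids. Since $\phi_{2_n}$ is already well-defined on isotopy classes of braid diagrams, it suffices to exhibit, for each of the moves $\Omega_2^{\pm 1}$ and $\Omega_3^{\pm 1}$, an explicit isotopy in $\mathbb{R}^2\times I$ between the lifted geometric braids, supported in a small box containing the diagrammatic move. For $\Omega_2$ one slides the two locally parallel strands apart in the $y$-direction until they no longer cross; for $\Omega_3$ one slides the over-strand across the crossing point of the other two, again a compactly supported move in the $y$-direction. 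This step is essentially pictorial and I would not dwell on it beyond writing down the isotopies.

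For the harder direction, given an ambient isotopy $F\co b\times I\to \mathbb{R}^2\times I$ between the two geometric braids $\phi_{2_n}(b_1)$ and $\phi_{2_n}(b_2)$, the plan is to put the one-parameter family of geometric braids $b_s=F_s(b)$ into general position with respect to the projection $\pi\co \mathbb{R}^2\times I\to \mathbb{R}\times I$ defined by $\pi(x,y,t)=(x,t)$. After a small perturbation of $F$ (which does not change the endpoints up to braid-diagram isotopy, by the easy direction), for all but finitely many values of $s$ the projection $\pi(b_s)$ is a braid diagram in the earlier sense, and within each such interval the braid diagrams are mutually isotopic. At the finitely many exceptional values of $s$, one of three codimension-one events occurs: a double-point tangency where two strands become momentarily tangent in the projection, a triple point where three strands meet in the projection, or an event where a crossing passes through one of the boundary lines $t=0,1$ (which can be ruled out by choosing $F$ to fix a collar of the boundary).

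Next I would identify each singular event with a Reidemeister move: a tangency in which two strands come together and separate again corresponds to the creation or cancellation of a pair of opposite crossings, i.e.\ an $\Omega_2^{\pm 1}$ move; a triple point corresponds to an $\Omega_3^{\pm 1}$ move. This uses only a local analysis around the exceptional value of $s$ in a small ball of $\mathbb{R}^2\times I$, and the over/under labels come out right because the relative order of $y$-coordinates at the event is locally constant on each sheet away from the singular instant. Concatenating the isotopies on the regular intervals with the Reidemeister moves at the finitely many singular times exhibits the required R-equivalence between $b_1$ and $b_2$.

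The main obstacle is the general position / transversality argument in the second direction: one must verify that a generic smooth isotopy of geometric braids really can be arranged so that the only projection singularities are transverse double tangencies and transverse triple points, occurring at distinct times. I would handle this by working in the jet bundle of maps from the disjoint union of $n$ intervals to $\mathbb{R}^2\times I$ and invoking the multi-jet transversality theorem, together with a dimension count showing that all higher-order degeneracies (quadruple points, cusps in the projection, simultaneous events, etc.) occur in codimension $\ge 2$ and can therefore be avoided by an arbitrarily small perturbation of the isotopy $F$ rel boundary.
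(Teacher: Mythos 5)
The paper itself gives no proof of this statement: it appears in the background appendix as a standard fact about braid diagrams versus geometric braids, deferred to the braid-theory literature (Kassel--Turaev), so there is no in-paper argument to compare against. Your proposal is the standard proof of that standard fact, and in outline it is sound: the easy direction by exhibiting compactly supported isotopies realising $\Omega_2^{\pm1}$, $\Omega_3^{\pm1}$, and the hard direction by putting the one-parameter family $b_s=F_s(b)$ in general position with respect to the projection $(x,y,t)\mapsto(x,t)$, so that generic times give isotopic diagrams and the finitely many singular times give tangency ($\Omega_2$) or triple-point ($\Omega_3$) events, while boundary events are excluded because all $b_s$ meet $t=0,1$ in the fixed points $(k,0,0)$, $(k,0,1)$.

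Two points deserve more care. First, the paper's geometric braids and their isotopies are merely continuous (strands are topological intervals), so before you can invoke multi-jet transversality you must approximate the given isotopy $F$ by a smooth (or PL) one rel the boundary levels, keeping each $F_s(b)$ an embedded braid; this smoothing step is standard but is exactly where a fully rigorous write-up has to do some work, and you should state it rather than start from ``a generic smooth isotopy.'' Second, your dimension count treats cusps of the projection as a codimension-$\ge 2$ degeneracy; that is not the right reason they are absent. In a generic one-parameter family of curves in the plane a cusp (birth of a kink, i.e.\ an $\Omega_1$ event) is a codimension-one phenomenon; it is excluded here because every strand of a geometric braid is monotone in $t$, so its projection is an immersed graph over $I$ for all $s$ and vertical tangencies never occur. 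This monotonicity is precisely why braid diagrams need only $\Omega_2,\Omega_3$ while knot diagrams also need $\Omega_1$, and it should be invoked explicitly. With those two repairs your argument is complete and matches the proof one finds in the standard references.
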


\begin{corollary}$\phi_{2_n}\circ \phi_{1_{n_\epsilon}}: B_n \rightarrow \mathbb{B}_n$ is bijective.
\end{corollary}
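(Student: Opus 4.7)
The plan is to assemble the corollary directly from the two results immediately preceding it, with no new topological input required. The preceding theorem asserts that $\phi_{1_{n_\epsilon}}: B_n \to \mathtt{B}_n$ is an isomorphism of groups, hence in particular a bijection of underlying sets. So the corollary reduces to showing that $\phi_{2_n}: \mathtt{B}_n \to \mathbb{B}_n$ is itself a bijection, after which bijectivity of the composition is immediate.

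For surjectivity I would simply cite the construction sketched in the paragraph just before the quoted theorem: given any geometric $n$-braid $g$, project onto the first and third coordinates and label the intersections using the relative values of the second coordinate to record over/undercrossings. This yields a braid diagram $d$, and the same paragraph observes that $\phi_{2_n}(d)$ is isotopic to $g$. Passing to the R-equivalence class of $d$ exhibits a preimage in $\mathtt{B}_n$.

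For injectivity I would invoke the preceding theorem, which states that $\phi_{2_n}(b_1) = \phi_{2_n}(b_2)$ if and only if $b_1$ and $b_2$ are R-equivalent. Since $\mathtt{B}_n$ is by construction the set of isotopy classes of braid diagrams modulo R-equivalence, this statement is exactly the assertion that the induced map on $\mathtt{B}_n$ has trivial fibers, i.e. is injective.

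The only bookkeeping subtlety, and the place I would be most careful in writing it up, is that the theorem as quoted presents $\phi_{2_n}$ as a map from isotopy classes of braid diagrams, so one must first note that it descends to the R-equivalence quotient $\mathtt{B}_n$ (which is an immediate consequence of the same theorem, applied in the easier direction). There is no genuine analytic or topological obstacle beyond this, since all the substantive work has been absorbed into the preceding theorem identifying the fibers of $\phi_{2_n}$ with R-equivalence classes and into the identification of $\phi_{1_{n_\epsilon}}$ as an isomorphism.
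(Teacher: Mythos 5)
Your argument is correct and is exactly the intended one: the paper states this corollary without proof, as an immediate consequence of the isomorphism theorem for $\phi_{1_{n_\epsilon}}$, the surjectivity of $\phi_{2_n}$ observed in the preceding paragraph, and the theorem identifying the fibers of $\phi_{2_n}$ with R-equivalence classes (which also justifies, as you note, that $\phi_{2_n}$ descends to $\mathtt{B}_n$ so the composition makes sense).
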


If $f:(0,\infty)\to (0,1)$ is an order preserving homeomorphism, then the map $\phi_{3_n}:R^2\times I\to D\times  S^1$ where 
$$\phi_{3_n}(r(cos(\theta),sin(\theta)), t) = (f(r)(cos(\theta),sin(\theta)), e^{it})$$
induces a well defined map from geometric $n$-braids to isotopic classes of Links. 

The map $\phi:\bds \to L$ obtained by composition of the above maps, $\phi(b , n) = \phi_{3_n} \circ \phi_{2_n}\circ\phi_{1_{n_+}}(b)$, assigns an isotopy class of links to each braid which could be labelled as 'closing the braid'. Markov's Theorem and Alexander's Theorem can thus be reformulated in terms of $\phi$.

\begin{theorem}[Alexander] $\phi$ is surjective.
 \end{theorem}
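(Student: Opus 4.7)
My plan is to prove Alexander's theorem by the classical "braiding" procedure: given an arbitrary link $L$, produce a regular projection in the plane, choose a basepoint off the diagram, and then modify the diagram by local moves until every arc of the diagram winds monotonically around the basepoint in a single angular direction. Once this is achieved, the link sits as a closed braid around the axis through the basepoint perpendicular to the plane, which will exhibit $L$ as $\phi(b,n)$ for some braid $b \in B_n$.

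More precisely, first I would use general position to replace $L$ by an ambient isotopic link admitting a regular projection $\pi\co L \to \mathbb{R}^2$ whose image $D$ is a link diagram with finitely many transverse double points and crossing information. Next I pick a point $p\in\mathbb{R}^2 \setminus D$, which plays the role of the braid axis. Using the radial coordinate centered at $p$, each oriented arc of $D$ (between crossings) is either \emph{good}, meaning its angular coordinate is strictly monotonic in the positive direction as one traverses it, or \emph{bad}. My goal is to modify $D$ by ambient isotopy of $L$ until all arcs are good, in which case the preimage of each ray from $p$ meets $L$ in a constant number $n$ of points, and $L$ is manifestly the closure of an $n$-braid whose crossings are the crossings of $D$.

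The core step is \emph{Alexander's trick}. For each bad arc $\alpha$, I subdivide it (if necessary) into short subarcs, each contained in a small disk disjoint from $p$, and replace such a subarc by the complementary arc of a triangle whose third vertex lies on the opposite side of $p$, pushed either entirely over or entirely under the rest of $D$. This local replacement is realized by an ambient isotopy of $L$ in $\mathbb{R}^3$ (the added piece of curve is lifted to a height above or below all other strands) and it flips the angular direction of the chosen subarc. By choosing a monovariant, for instance a weighted count of bad subarcs together with their angular extent, I can ensure that each application strictly decreases the complexity and that the procedure terminates in finitely many steps. I would present the termination argument carefully by first subdividing all bad arcs so that each lies in a sector of angular width less than $\pi$, then processing them one at a time.

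The main obstacle is making the termination argument rigorous rather than merely plausible: one must show that the trick can always be performed without creating new bad arcs of equal or greater complexity, which requires choosing the height of the pushed-across piece carefully (above or below all strands it crosses) and subdividing finely enough at the start. Once termination is established, the resulting diagram is, by construction, the image under the braid-closure map of a product of elementary crossings $\overline{\sigma}_i^{\pm}$ in some $\mathtt{B}_n$, and via the isomorphism $\phi_{2_n}\circ\phi_{1_{n_+}}$ of the excerpt this product lifts to an element $b\in B_n$ with $\phi(b,n)$ ambient isotopic to $L$. Hence $\phi$ is surjective.
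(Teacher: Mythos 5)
The paper does not actually prove this statement: both here and in Section~\ref{brdsKnts} Alexander's theorem is imported as a known classical result (with the Appendix merely rephrasing it as surjectivity of the composite map $\phi=\phi_{3_n}\circ\phi_{2_n}\circ\phi_{1_{n_+}}$), so there is no in-paper proof to compare against. What you supply is the standard Alexander braiding argument, and in outline it is the right proof: orient $L$, take a regular diagram, fix an axis point $p$, and use Alexander's trick to throw each ``bad'' (negatively winding) subarc across $p$, pushed entirely above or entirely below the rest of the diagram, until every arc winds monotonically, at which point the diagram is visibly a closed braid and lifts through the isomorphism $\phi_{2_n}\circ\phi_{1_{n_+}}$ to some $b\in B_n$ with $\phi(b,n)$ isotopic to $L$. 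The one place your sketch is looser than it should be is the termination step: the correct preparatory subdivision is that each bad subarc should contain \emph{at most one crossing} of the diagram (not merely span an angle less than $\pi$), since a subarc that passes over one strand and under another cannot be swept across the triangle without hitting the diagram; with that subdivision the monovariant is simply the number of bad subarcs, which drops by one at each trick because the two replacement arcs are good and any crossings they create lie on arcs that remain good. With that adjustment your argument is the classical proof and establishes surjectivity of $\phi$; the paper, by contrast, buys the same conclusion by citation, which is consistent with its stated intention that the topological background is not needed to read the main text.
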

 
\begin{theorem}[Markov] $\phi(b_1,m_1) = \phi(b_2,m_2)$ if and only if $(b_1,m_1)\sim(b_2,m_2)$.
 \end{theorem}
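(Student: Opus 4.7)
The plan is to prove both directions of Markov's theorem. The forward direction (each generator of $\sim$ preserves the ambient isotopy class of $\phi$) is verified by direct geometric constructions at the level of closed braids. For conjugation $(b,m)\sim(aba^{-1},m)$, the closures of $ab$ and $ba$ are related by an ambient isotopy that slides the initial factor $a$ once around the closure axis; equivalently, the solid torus neighborhood of the axis rotates relative to the complementary solid torus. For stabilization $(b,m)\sim(\sigma_m^{\pm 1}b,m+1)$, the new strand together with its single crossing bounds an embedded disc in the complement of the remainder of the closed braid, and contracting this disc furnishes the desired ambient isotopy back to $\phi(b,m)$.

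The backward direction is the substantive content. I would fix a smooth ambient isotopy $F:S^3\times[0,1]\to S^3$ carrying $\phi(b_1,m_1)$ to $\phi(b_2,m_2)$ and let $L_t=F_t(\phi(b_1,m_1))$. After a generic perturbation of $F$ relative to the braid axis (the core circle of the standard genus-one Heegaard splitting in which closed braids live), the intermediate link $L_t$ is in braid form for all $t$ outside a finite subset of $[0,1]$, so it determines a braid word of a well-defined length $m(t)$. On the open complement of the exceptional times the braid word is locally constant, and at each exceptional time one sees a codimension-one local catastrophe: two adjacent strands cross in projection (producing either a braid-group relation, which leaves the element of $B_{m(t)}$ unchanged, or an elementary conjugation), a strand becomes tangent to a disc transverse to the axis (a Reidemeister-II-type event, also giving a conjugation), or a strand meets the braid axis itself (producing a stabilization or destabilization). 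Concatenating the moves across all catastrophe times exhibits $(b_1,m_1)$ and $(b_2,m_2)$ as Markov equivalent.

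The principal obstacle is the exhaustive classification of these catastrophe events and the bookkeeping needed to verify that each local model translates into one of the three Markov moves (or a braid relation). Establishing enough genericity to rule out higher-codimension degenerations, and carefully writing down the change in braid word at each permitted local model, is the technical heart of the argument. Since this is the classical theorem of Markov, I would rely on the standard treatment — either Markov's original argument or the cleaner Birman-Menasco braid foliation method — and cite it rather than reproduce the full case analysis.
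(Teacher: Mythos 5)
The paper offers no proof of this statement at all: it is quoted in the appendix as the classical theorem of Markov (with the standard references, e.g.\ Kassel--Turaev, in the bibliography), exactly as you propose to handle the hard direction by citation. Your sketch is the standard one and is correct as far as it goes --- conjugation and stabilization closures checked geometrically, the converse via a general-position analysis of an ambient isotopy or via braid foliations --- so your treatment is essentially the same as the paper's, namely deferring the substantive converse to the classical literature.
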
 

\nocite{kassel2008braid}
\nocite{hodges1993model}
\bibliographystyle{plain}
\bibliography{KnotsPredicate}

\begin{thebibliography}{1}

\bibitem{hodges1993model}
W.~Hodges.
\newblock {\em Model theory}, volume~42.
\newblock Cambridge Univ Pr, 1993.

\bibitem{kassel2008braid}
C.~Kassel and V.G. Turaev.
\newblock {\em Braid groups}.
\newblock Springer Verlag, 2008.

\bibitem{kauffman1987knots}
L.H. Kauffman.
\newblock {\em On knots}, volume 115.
\newblock Princeton Univ Pr, 1987.

\end{thebibliography}

\end{document}